\title{Models of Bihyperelliptic Curves}
\author{Omri Faraggi}
\address{Department of Mathematics, University College London, London WC1H 0AY, UK}
\email{omri.faraggi.17@ucl.ac.uk}
\date{February 2021}
\begin{document}
	
\maketitle

\begin{abstract}
	We give an explicit description of the minimal regular model of bihyperelliptic curves with semistable reduction over a local field of odd residue characteristic. We do this using a generalisation of the cluster picture; a completely combinatorial object attached to a hyperelliptic curve $y^2 = f(x)$ over $K$ which contains the data of the $p$-adic distances between the roots of $f$. We add some information, resulting in a chromatic cluster picture, and show that this determines the minimal regular model of $Y$ with the action of Frobenius.
\end{abstract}

\renewcommand{\contentsname}{Table of contents}
\thispagestyle{empty}\setcounter{tocdepth}{1}
\tableofcontents

\section{Introduction}

\subsection{Summary} In this paper we give an explicit description of the minimal regular model of a bihyperelliptic curve $Y$ with semistable reduction over a local field $K$ of residue characteristic $p>2$. A \textit{bihyperelliptic curve} is a smooth curve with maps to two distinct hyperelliptic curves\footnote{For us, a hyperelliptic curve of genus $g$ is a curve given by equations $y^2 = f(x)$ and $v^2 = u^{2g+2} f(1/u)$ glued along the maps $(x, y) \mapsto (1/x, y/x^{g+1})$.} $C_1$ and $C_2$. This is achieved using a generalisation of the\textit{ cluster picture}, an invariant of hyperelliptic curves.  Cluster pictures are a relatively recent development; they are a completely combinatorial object attached to a hyperelliptic curve $C/K$ from which much of the local arithmetic of $C$ can be deduced. For example, cluster pictures have been used to calculate the Galois representation, semistable model, conductor and minimal discriminant of $C$ in \cite{DDMM18}, the minimal snc model if $C$ has tame reduction in \cite{faraggi20models}, the Tamagawa number in \cite{betts2018computation}, the root number in \cite{bisatt2019clusters} and finally differentials in \cite{kunzweiler2019differential} and \cite{muselli2020models}. A survey article is available at \cite{best2020users}. Our generalisation, the \textit{chromatic cluster picture} of $Y$, is an amalgamation of the cluster pictures of $C_1$ and $C_2$, and is mostly sufficient to determine the model of $Y$.

\subsection{Bihyperelliptic Curves, Models and Clusters} Our main objects of study will be bihyperelliptic curves, a curve $Y$ with maps to two distinct hyperelliptic curves $C_1$ and $C_2$. 

\begin{definition}
	\label{def:bihyp}
	Let $C_1 : y_1^2 = f_1(x)$ and $C_2 : y^2_2 = f_2(x)$ be hyperelliptic curves, given by their affine models, with $f_1$ and $f_2$ coprime. Then $C_h : y_h^2 = f_1(x)f_2(x)$ is their \textit{composite curve}, also hyperelliptic, and the curve $Y$, the smooth projective closure of $$ \left\{ \begin{array}{c}
	y_1^2 = f_1(x) \\ y^2_2 = f_2(x)
	\end{array}\right\},$$ is a \textit{bihyperelliptic curve}. The curves fit into a tower:

\begin{figure}[h]
	\centering
		\begin{tikzcd}
			& Y \arrow[d, no head] \arrow[rd, no head] &                              \\
			C_1 \arrow[ru, no head] \arrow[rd, no head] & C_h \arrow[d, no head]     & C_2 \\
			& \PP^1 \arrow[ru, no head]                             &                             
		\end{tikzcd}

\end{figure}

such that $Y/\PP^1$ is a Galois cover with Galois group $C_2 \times C_2$.
\end{definition}

%
%
%

We would like to find the minimal regular model $\Y^{\textrm{min}}$ of $Y$ in the case where $Y$ has semistable reduction. Recently there has been a flurry of activity finding explicit descriptions for models of different classes of curves, including \cite{DDMM18}, \cite{faraggi20models}, \cite{ruth2015models}, \cite{muselli2020models} (hyperelliptic curves), \cite{bouw2017computing} (superelliptic), \cite{lercier2018reduction}, \cite{bouw2020reduction} (genus 3), \cite{dokchitser2018models} ($\Delta_v$-regular).

To a hyperelliptic curve such as $C_1$ we attach a cluster picture, following \cite{DDMM18}.

\begin{definition}
	\label{def:cluster}
	Let $C : y^2 = f(x)$ be a hyperelliptic curve over $K$, with $\Rcal$ the set of roots of $f$. A \textit{cluster} is a non-empty subset $\s \subseteq \mathcal{R}$ of the form $\s = D \cap \Rcal$ for some disc $D = z + \pi^n \mathcal{O}_K$, where $z \in \overline{K}$ and $n \in \Q$. If $\s$ is a cluster and $|\s| > 1$, $\s$ is a \textit{proper} cluster and we define its \textit{depth} $$d_{\s} = \min_{r, r' \in \s} v_K(r-r').$$
	The \textit{cluster picture} $\Sigma = \Sigma_{C/K}$ is the set of all clusters of the roots of $f$.
\end{definition}

The chromatic cluster picture $\Sigma_{\chi}$ of $Y$ is the cluster picture $\Sigma_h$ of $C_h$, the composite curve of $C_1$ and $C_2$, along with a colouring function $c : \Rcal \rightarrow \{\textrm{red, blue}\}$, such that the roots coming from $C_1$ (resp. $C_2$) are coloured red (resp. blue). This information is sufficient to calculate the dual graph of the special fibre $\Y^{\textrm{min}}_k$ of the minimal regular model. 

\begin{thm}
	Let $K$ be a local field of odd residue characteristic and let $C_1 :y_1^2 = f_1(x)$ and $C_2: y_2^2 = f_2(x)$ be two hyperelliptic curves over $K$. Let $C_h$ be their composite curve. Let $Y$ be the bihyperelliptic curve arising from $C_1$ and $C_2$, such that $Y$ has semistable reduction and all the depths in the chromatic cluster picture of $Y$ are integers. Then the dual graph with genera of the special fibre $\Y^{\mathrm{min}}$ of the minimal regular model of $Y$ is entirely determined by the chromatic cluster picture of $Y$.
\end{thm}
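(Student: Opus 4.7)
The strategy is to construct the minimal regular model $\Y^{\mathrm{min}}$ explicitly from $\Sigma_{\chi}$ and then read off the dual graph with genera. The key decomposition is the factorisation $Y \to C_h \to \PP^1$: the lower map corresponds to the uncoloured cluster picture $\Sigma_h$, and the upper map is the quadratic cover $Y \to C_h$ given by $y_1^2 = f_1(x)$, whose branch locus over $\PP^1$ is controlled by the red roots, i.e., by the colouring data in $\Sigma_{\chi}$.

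\textbf{Step 1 (model of $C_h$).} By \cite{DDMM18}, $\Sigma_h$ determines $\mathcal{C}_h^{\mathrm{min}}$. Its special fibre decomposes into principal components $\Gamma_{\mathfrak{s}}$, one per proper cluster $\mathfrak{s}$, joined by chains of $\PP^1$s whose lengths and incidences follow from the relative cluster depths. Each $\Gamma_{\mathfrak{s}}$ carries a canonical affine coordinate obtained by centring at a root of $\mathfrak{s}$ and rescaling by $\pi^{d_{\mathfrak{s}}}$.

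\textbf{Step 2 (cover above principal components).} Let $\Y$ be the normalisation of $\mathcal{C}_h^{\mathrm{min}}$ in $K(Y)$. Over each principal component $\Gamma_{\mathfrak{s}}$ substitute the adapted coordinate into $f_1$, factor out the common power of $\pi$, and reduce modulo $\pi$. The colouring tells us exactly which proper subclusters of $\mathfrak{s}$ are red and hence contribute nontrivial factors to this reduction, while the parity of the number of red roots inside $\mathfrak{s}$ (together with the colour profile of its odd subclusters) determines the leading term modulo squares. From this one reads off whether the cover above $\Gamma_{\mathfrak{s}}$ splits into two copies, is étale and connected, or is ramified with an explicit branch locus on $\Gamma_{\mathfrak{s}}$; Riemann--Hurwitz then gives the genus of each component of $\Y^{\mathrm{min}}$ above $\Gamma_{\mathfrak{s}}$. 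The Galois group $C_2\times C_2$ lets us perform the symmetric analysis with $f_2$ and combine the two, using the colouring to separate red and blue contributions.

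\textbf{Step 3 (chains, nodes, minimality).} Apply the same local analysis to each $\PP^1$ in each chain of $\mathcal{C}_h^{\mathrm{min}}$ and to each intersection point. After normalisation one may need to resolve singularities by inserting additional chains of $\PP^1$s; the integrality-of-depths hypothesis keeps the relevant valuations in $\mathbb{Z}$, so the insertion lengths are computable directly from $\Sigma_{\chi}$. A final contraction of any $(-1)$-curves produces $\Y^{\mathrm{min}}$, whose dual graph with genera is then determined entirely by the chromatic cluster picture.

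The main obstacle is Step 3: tracking the double cover through the chains of $\PP^1$s and, in particular, determining when the normalisation fails to be regular at a node and demands an extra chain of exceptional components, and when those exceptional components are $(-1)$-curves requiring contraction. The local geometry at a node is governed by a short list of colour-parity configurations of the adjacent clusters, and the semistability hypothesis on $Y$ rules out the pathological wild cases, while the odd-residue-characteristic assumption ensures Kummer theory behaves cleanly on each component. The proof therefore reduces to verifying, case by case in this finite list, that the output of the normalisation-and-contraction procedure depends only on the combinatorial data in $\Sigma_{\chi}$.
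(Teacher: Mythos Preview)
Your outline is a workable strategy, but it is organised differently from the paper's proof. The paper does not factor through $\mathcal{C}_h^{\min}$ and then analyse the single double cover $Y\to C_h$. Instead it starts from the model $\X$ of $\PP^1$ attached to $\Sigma_h$, normalises $\X$ directly in $K(Y)$, and simultaneously normalises in $K(C_1)$, $K(C_2)$, $K(C_h)$ to obtain the three intermediate models $\mathcal{C}_1,\mathcal{C}_2,\mathcal{C}_h$. The point is that all three quotients of $Y$ by the nontrivial subgroups of $C_2\times C_2$ are hyperelliptic, so their special fibres are already computed by \cite{DDMM18}; the number of components of $\Y_k$ over a given cluster (one, two or four) and the pattern of linking chains are then read off by comparing with all three quotient models at once, rather than by reducing $f_1$ on each component of a model of $C_h$. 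This symmetric use of the three quotients is what makes the component count in the paper so short. A second difference is that the paper proves the normalisation of $\X$ in $K(Y)$ is already regular (via a criterion of Srinivasan applied through $\mathcal{C}_1$), so your Step~3 resolution of singularities at nodes never arises; the only post-processing is contracting components of multiplicity $>1$ to reach $\Y^{\min}$. Your route through $\mathcal{C}_h^{\min}$ would also need care because $\mathcal{C}_h^{\min}$ may have fewer components than the normalisation of $\X$ in $K(C_h)$, so some branch points of $Y\to C_h$ could collide on a single component; working from $\X$ avoids this. In short, your approach can be made to work, but the paper's three-quotient method is both more symmetric and sidesteps the node-by-node case analysis you flag as the main obstacle.
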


\begin{eg}
	\label{eg:twinintrochrom}
	Let $Y$ be any bihyperelliptic curve with the chromatic cluster picture below, where the number to the bottom right of a cluster $\s$ indicates the relative depth $\delta_{\s}$ of the cluster: that is, $d_{\s} - d_{P(\s)}$ where $P(\s)$ is the cluster immediately containing $\s$. The dual graph of $\Y^{\textrm{min}}_k$ for any such $Y$ is shown below.
	\begin{figure}[ht]
		\centering
		\clusterpicture
		\Root[A] {}{first}{r1};
		\Root[B] {}{r1}{r2};
		\ClusterLDName c1[][n][\s] = (r1)(r2), clB;
		\Root[A] {}{c1}{r3};
		\Root[A] {}{r3}{r4};
		\Root[B] {}{r4}{r5};
		\Root[B] {}{r5}{r6};
		\ClusterLDName c2[][0][\Rcal] = (c1)(r3)(r4)(r5)(r6), clB;
		\endclusterpicture \quad \quad
		\raisebox{-1.8em}{\begin{tikzpicture}
			\tikzstyle{every node}=[font=\tiny]
			\node at (0.7, 0) {$v_{\Rcal}$};
			\begin{scope}[every node/.style={circle,thick,draw}]
			\node (A) at (0,0) {3};
			\end{scope}
			
			\path (A) edge[very thick, out=150, in=210, looseness=8] node[left] {n} (A);
			
			\end{tikzpicture}}
	\end{figure}
	
\end{eg}

We prove this theorem by explicitly describing the dual graph. In order to do this, we define a colouring on the remaining clusters of $\Sigma_{\chi}$. Each cluster can be coloured red, blue, both or neither. In particular, clusters with an odd number of red (resp. blue) children are coloured \textit{red} (resp. \textit{blue}). Clusters which are coloured \textit{both} red \textit{and} blue are \textit{purple}, and clusters which are coloured neither are \textit{black}.


From here, our description of $\Y^{\textrm{min}}_k$ is very much in the spirit of \cite[Theorem~8.5]{DDMM18} --- to a cluster $\s$ we associate $1$, $2$ or $4$ components of the special fibre, and the components of $\s$ and $\s'$ are linked by a chain of $\PP^1$s if $\s'$ is a maximal subcluster (\textit{child}) of $\s$ (or vice versa). The length of this chain is determined by $\delta_{\s'} = d_{\s'} - d_{\s}$. In Theorem \ref{thm:frob}, we also give the action of Frobenius on the the dual graph of $\Y^{\textrm{min}}_k$. Before we state our theorem, let us define some of the cluster terminology used.

\begin{definition}
	A cluster $\s\in \Sigma_{\chi}$  is \textit{odd} (resp. \textit{even}) if $|\s|$ is odd (resp. even). A \textit{child} $\s' < \s$ is a maximal subcluster of $\s$. The cluster $\s$ is \textit{\"ubereven} if all its children are even. It is \textit{proper} if $|\s| \geq 2$, \textit{principal} if $|\s| \geq 3$ (except in a few cases if $\s = \Rcal$, see Definition \ref{def:principal}) and a \textit{twin} if $|\s| = 2$. It is \textit{chromatic} if it is red, blue or purple. It has \textit{polychromatic children} if it has a purple child, or if it has both a red and a blue child (it can have other children as well) and \textit{monochromatic children} if all of its chromatic children are red or all of its chromatic children are blue. The \textit{relative depth} of a cluster $\s' < \s$ is $\delta_{\s'} = d_{\s'} - d_{\s}$ and the relative depth between any two clusters $\s_1$ and $\s_2$ is $\delta_{\s_1,\s_2} = d_{\s_1} + d_{\s_2} - 2d_{\s_1 \wedge \s_2}$ where $\s_1 \wedge \s_2$ is the smallest cluster containing both $\s_1$ and $\s_2$.
\end{definition}

\begin{definition}
	\label{def:genusintro}
	Let $\s$ be a cluster. We assign to $\s$ a \textit{chromatic genus} $g_{\chi}(\s)$ as follows: let $\s_{\chi}$ be the set of chromatic children of $\s$. If $\s$ has polychromatic children, then $g_{\chi}(\s) = |\s_{\chi}| - 3$ if $\s = \Rcal$ and $f_1$ and $f_2$ both have even degree, or $\Rcal = \s \sqcup \s'$, $\s$ is even and $f_1$ and $f_2$ both have even degree; and $g_{\chi}(\s) = |\s_{\chi}| - 2$ otherwise. If $\s$ has monochromatic children, then $g_{\chi}(\s)$ is such that $2g_{\chi}(\s) + 1 = |\s_{\chi}|$ or $2g_{\chi}(\s) + 2 = |\s_{\chi}|$. Otherwise $g_{\chi}(\s) = 0$. 
\end{definition}

If $\s$ has monochromatic children or no chromatic children then $g_{\chi}(\s) = g(\s)$, as in \cite[Section~5.3]{DDMM18}. Our main result is the following. We have excluded the cases where $\Rcal$ is not principal, but these are dealt with in Theorem \ref{thm:main}.

\begin{thm}[=Theorem \ref{thm:main}]
	\label{thm:mainintro}

		Let $K$ be a local field of odd residue characteristic and let $C_1$ and $C_2$ be two hyperelliptic curves over $K$. Let $C_h$ be their composite curve. Let $Y$ be the bihyperelliptic curve arising from $C_1$ and $C_2$, such that $Y$ has semistable reduction and all the depths in the chromatic cluster picture of $Y$ are integers. Suppose further that $\Rcal$ has at least three children, none of which of size $2g(C_h)$\footnote{This is a sufficient condition for $\Rcal$ to be principal.}. Then the dual graph of $\Y^{\mathrm{min}}_k$ has the following explicit description:
		
		Each principal cluster $\s$ contributes vertices of genus $g_{\chi}(\s)$ to the dual graph. If $\s$ is not \"ubereven: $1$ vertex $v_{\s}$ if $\s$ has polychromatic children and $2$ vertices $v_{\s}^+, v_{\s}^-$ if $\s$ has monochromatic children; and if \"ubereven: $2$ vertices $v_{\s}^+, v_{\s}^-$ if $\s$ has chromatic children and $4$ vertices $v_{\s}^{+,+}, v_{\s}^{+,-}, v_{\s}^{-,+}, v_{\s}^{-,-}$ if $\s$ has no chromatic children.
		
		These are linked by edges as follows:
		
		\begin{table}[ht]
			\centering
			\begin{tabular}{|c|c|c|c|c|}
				\hline
				Name & From & To & Length & Condition  \\ \hline
				$L_{\s'}^+$ & $v_{\s}^+$ & $v_{\s'}^{\sigma}$ & \multirow{2}{*}{$\frac{1}{2}\delta_{\s'}$} & \multirow{2}{*}{$\s' < \s$ both principal, $\s'$ chromatic} \\ \cline{1-3}
				$L_{\s'}^-$ & $v_{\s}^-$ & $v_{\s'}^{-\sigma}$ &  &  \\ \hline
				$L_{\s'}^{+,+}$ & $v_{\s}^{+,+}$ & $v_{\s'}^{+,+}$ & \multirow{4}{*}{$\delta_{\s'}$} & \multirow{4}{*}{$\s' < \s$ both principal, $\s'$ black} \\ \cline{1-3}
				$L_{\s'}^{+,-}$ & $v_{\s}^{+,-}$ & $v_{\s'}^{+,-}$ &  &  \\ \cline{1-3}
				$L_{\s'}^{-,+}$ & $v_{\s}^{-,+}$ & $v_{\s'}^{-,+}$ &  &  \\ \cline{1-3}
				$L_{\s'}^{-,-}$ & $v_{\s}^{-,-}$ & $v_{\s'}^{-,-}$ &  &  \\ \hline
				
				$L_{\tfrak}$ & $v_{\s}^+$ & $v_{\s}^-$ & $\delta_{\tfrak}$ & $\tfrak < \s$, $\tfrak$ chromatic twin, $\s$ principal \\ \hline	
				$L_{\tfrak}^+$ & $v_{\s}^{+,+}$ & $v_{\s}^{\sigma,-\sigma}$ & \multirow{2}{*}{$2\delta_{\tfrak}$} & \multirow{2}{*}{$\tfrak < \s$, $\tfrak$ black twin, $\s$ principal} \\ \cline{1-3}
				$L_{\tfrak}^-$ & $v_{\s}^{-,-}$ & $v_{\s}^{-\sigma,\sigma}$ &  & \\ \hline
			\end{tabular}
		\end{table}
		
		where $\sigma = \sigma(\s, \s')\in\{-1\}$ is defined in Definition \ref{def:gensig}; $v_{\s}^{\pm,+} = v_{\s}^{\pm,-} = v_{\s}^{\pm}$ if $\s$ is non-\"ubereven with monochromatic red children; $v_{\s}^{+,\pm} = v_{\s}^{-,\pm} = v_{\s}^{\pm}$ if $\s$ is non-\"ubereven with has monochromatic blue children; $v_{\s}^{\pm, \pm} = v_{\s}^+$, $v_{\s}^{\pm, \mp} = v_{\s}^-$ if $\s$ is \"ubereven with chromatic children; and $v_{\s}^+ = v_{\s}^- = v_{\s}$ if $\s$ is non-\"ubereven with polychromatic children.
	 
\end{thm}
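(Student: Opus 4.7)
The plan is to construct $\Y^{\mathrm{min}}$ in two stages, leveraging the known description of the minimal regular model $\mathcal{C}_h^{\mathrm{min}}$ of the composite curve from \cite{DDMM18} and then analyzing the degree-$2$ cover $Y \to C_h$ (say, the one cutting out $y_1^2 = f_1$). First I would produce a proper semistable model $\Y$ of $Y$ by normalizing $\mathcal{C}_h^{\mathrm{min}}$ in the function field $K(Y)$. The components and chains of $(\mathcal{C}_h^{\mathrm{min}})_k$ come indexed by principal clusters and relative depths of $\Sigma_h$, so the task reduces to determining, component by component, whether the normalization is \'etale (giving a split cover) or ramified, and then resolving singularities.

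For a principal cluster $\s$, on the corresponding component $\Gamma_\s$ of $(\mathcal{C}_h^{\mathrm{min}})_k$ the reduction of $f_1$ vanishes precisely at points indexed by red children of $\s$, and similarly $f_2$ at blue children. Ramification of $y_1^2 = f_1$ on $\Gamma_\s$ is controlled by whether this reduction is a square in the residue field: an odd number of red children means ramified (red cluster), an even number means split (up to the infinity adjustment handled by the \"ubereven case). Combining this with the existing behaviour of $y_h^2 = f_1 f_2$ on $\Gamma_\s$ recovers all four cases of the theorem: polychromatic non-\"ubereven gives one vertex, monochromatic gives two, \"ubereven chromatic gives two, and \"ubereven black gives four. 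A Riemann--Hurwitz calculation on each component then yields the chromatic genus $g_\chi(\s)$ of Definition \ref{def:genusintro}.

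For edges, I would study each chain of $\PP^1$s in $\mathcal{C}_h^{\mathrm{min}}$ linking $\Gamma_\s$ to $\Gamma_{\s'}$ (for $\s' < \s$ both principal) or forming a self-loop for a twin $\tfrak < \s$. On a chain corresponding to a chromatic child the degree-$2$ cover is ramified at the ends and splits or is connected in the middle; resolving the resulting non-regular points yields a chain whose length doubles or halves according to the colouring, giving the $\tfrac12\delta_{\s'}$, $\delta_{\s'}$, $\delta_{\tfrak}$, $2\delta_{\tfrak}$ of the table. The matching of $\pm$ labels across each chain is dictated by a sign $\sigma(\s,\s') \in \{\pm 1\}$ tracking a compatible choice of square root of the leading term of $f_1$ (or $f_2$) at the two ends; I would isolate this bookkeeping in a separate definition and lemma. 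Finally, once semistability is checked and any $(-1)$-curves are contracted, the resulting model is minimal regular.

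The main obstacle is the case analysis: chromaticity of parent and child (red, blue, purple or black), \"ubereven versus not, and twin versus principal all interact. In particular, tracking the $\pm$ labels consistently across a chain --- so that $v_\s^+$ connects to $v_{\s'}^{\sigma}$ rather than $v_{\s'}^{-\sigma}$ --- is the most delicate point, because it requires global compatibility between local sign choices at different clusters. I would handle this by first treating the simplest case where $\s$ and $\s'$ are both non-\"ubereven with polychromatic children (so there is only one vertex on each side and no matching is needed), and build up to the fully \"ubereven black case where all four branches must be matched; the edge cases where $\Rcal$ is not principal will need a separate argument (deferred to the non-principal part of Theorem \ref{thm:main}).
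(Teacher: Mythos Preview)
Your overall strategy---normalise a semistable model of one of the intermediate curves in $K(Y)$ and analyse a single degree-$2$ cover---is a genuine alternative to what the paper does. The paper instead normalises the model $\X$ of $\PP^1$ (the one separating all branch points) directly in $K(Y)$, and then exploits the full $C_2\times C_2$ Galois structure by simultaneously comparing $\Y$ against \emph{all three} quotient models $\mathcal{C}_1,\mathcal{C}_2,\mathcal{C}_h$, whose special fibres are already known from \cite{DDMM18}. This makes the $(\pm,\pm)$ labelling canonical (the two signs record the lift from $\mathcal{C}_1$ and $\mathcal{C}_2$ respectively), so the delicate sign-matching you flag as the main obstacle becomes a direct read-off rather than a global compatibility check. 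Lengths are obtained by a harmonic-morphism argument rather than by explicitly resolving singularities.

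However, your ramification analysis for $Y\to C_h$ contains an error. You claim that on a component $\Gamma_{\s}$ of $(\mathcal{C}_h^{\min})_k$ the cover $y_1^2=f_1$ is ramified precisely when $\s$ has an odd number of red children. Take $\s$ with exactly three red singleton children and nothing else: your criterion gives one component, but the theorem (monochromatic, non-\"ubereven) gives two. The point you are missing is that $\Gamma_{\s,h}$ is itself a double cover of $\Gamma_{\s,\PP^1}$ branched at the children that are odd in $\Sigma_h$; a red child is odd in $\Sigma_h$, so lies under a \emph{single} point of $\Gamma_{\s,h}$ where the pullback of $f_1$ acquires an \emph{even}-order zero and hence contributes no ramification to $Y\to C_h$. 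The correct splitting criterion is that $\bar f_1$ is a square in $k(\Gamma_{\s,h})=k(x)(y_h)$, which happens iff $\bar f_1$ or $\bar f_1\bar f_2$ is already a square in $k(x)$, i.e.\ iff $\s$ has no purple child and not both a red and a blue child---exactly ``monochromatic or no chromatic children''. Your route can be repaired with this correction, but you would still face the sign-compatibility bookkeeping that the paper's three-quotient argument avoids.
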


\begin{remark}
The action of Frobenius can also be described explicitly in terms of the chromatic cluster picture of $Y$. See Theorem \ref{thm:frob} for details.
\end{remark}

\begin{remark}
	A condition we require is that $Y$ have semistable reduction. This is hard to check directly, but it is equivalent to the curves $C_1$, $C_2$ and $C_h$ having semistable reduction, which can be checked using the Semistability Criterion for hyperelliptic curves given in \cite[Theorem~1.8]{DDMM18}. This equivalence arises due to the existence of an isogeny between the Jacobians of $Y$ and $C_1 \times C_2 \times C_h$. It cannot be found in the literature (to our knowledge), but will appear in an upcoming paper of Dokchitser and Morgan.
\end{remark}

\begin{eg}
	\label{eg:introtwinblack}
	Let $C_1 : y_1^2 = (x^2 - p^{2n})(x-1)$ and $C_2 : y_2^2 = x^3 - 2$ be two elliptic curves with composite curve $C_h: y_h^2 = (x^2 - p^n)(x-1)(x^3 - 2)$ and associated bielliptic curve $Y$. The chromatic cluster picture of $Y$ and the dual graph of $\Y^{\textrm{min}}_k$ is below.
	
	\begin{figure}[ht]
		\centering
		\clusterpicture
		\Root[A] {}{first}{r1};
		\Root[A] {}{r1}{r2};
		\ClusterLDName c1[][n][\s] = (r1)(r2);
		\Root[A] {}{c1}{r3};
		\Root[B] {}{r3}{r4};
		\Root[B] {}{r4}{r5};
		\Root[B] {}{r5}{r6};
		\ClusterLDName c2[][0][\Rcal] = (c1)(r3)(r4)(r5)(r6), clB;
		\endclusterpicture\quad \quad
		\raisebox{-3em}{\begin{tikzpicture}
		
		\tikzstyle{every node}=[font=\tiny]
		\node at (0.7, -1) {$v_{\Rcal}$};
		\begin{scope}[every node/.style={circle,thick,draw}]
		\node (A) at (0,-1) {2};
		\end{scope}
		
		\path (A) edge[very thick, out=120, in=150, looseness=16] node[left] {2n} (A);
		\path (A) edge[very thick, out=210, in=240, looseness=16] node[left] {2n} (A);  
		
		\end{tikzpicture}}
		
	\end{figure}
	
	The top cluster $\Rcal$ is not \"ubereven and has polychromatic children so contributes one component of genus $2$. The twin is black and has depth $n$ so contributes two loops of length $2n$. For a description of the action of Frobenius see Example \ref{eg:twinblack}.
\end{eg}

\begin{eg}
	\label{eg:intro}
	Let $C_1 : y^2_1 = ((x-1)^4 - p^{12})((x-2)^3 + p^6)$ and $C_2 : y_1^2 = (x^3 - p^{18})((x-2)^3 - p^6)$ be hyperelliptic curves over $K$ and let $Y$ be the associated bihyperelliptic curve. The chromatic cluster picture and dual graph of $\Y^{\textrm{min}}_k$ are shown below. The cluster picture $\Sigma_{\chi}$ consists of the top cluster $\Rcal$, a blue cluster $\s_1$, a black cluster $\s_2$ and a purple cluster $\s_3$. The top cluster has a purple child and hence there is one component, $v_{\Rcal}$, arising from $\Rcal$. The component is genus $0$ by Definition \ref{def:genusintro}. The cluster $\s_1$ has monochromatic (blue) children, so contributes two components, $v_{\s_1}^+$ and $v_{\s_1}^-$ of genus $1$. Furthermore, it is chromatic so there are two linking chains between the components of $\Rcal$ and $\s_1$, each of length $3$. The cluster $\s_2$ also has monochromatic (red) children, but it is black and hence there are four linking chains, each of length $3$, between $v_{\Rcal}$ and $v_{\s_2}^{\pm}$. Finally $\s_3$ is chromatic with polychromatic children so it contributes a single component with two linking chains to $v_{\Rcal}$. 
	
	\begin{figure}[ht]
		\centering

		\clusterpicture
		\Root[B] {} {first} {r1};
		\Root[B] {} {r1} {r2};
		\Root[B] {} {r2} {r3};
		\ClusterLDName c1[][6][\mathfrak{s}_1] = (r1)(r2)(r3),clD;
		\Root[A] {} {c1} {r4};
		\Root[A] {} {r4} {r5};
		\Root[A] {} {r5} {r6};
		\Root[A] {} {r6} {r7};
		\ClusterLDName c2[][3][\mathfrak{s}_2] = (r4)(r5)(r6)(r7);
		\Root[A] {} {c2} {r8};
		\Root[A] {} {r8} {r9};
		\Root[A] {} {r9} {r10};
		\Root[B] {} {r10} {r11};
		\Root[B] {} {r11} {r12};
		\Root[B] {} {r12} {r13};
		\ClusterLDName c3[][2][\mathfrak{s}_3] = (r8)(r9)(r10)(r11)(r12)(r13),clB;
		\ClusterLDName c4[][0][\Rcal] = (c1)(c2)(c3),clD;
		\endclusterpicture \quad \quad
		\raisebox{-3em}{\begin{tikzpicture}
		\tikzstyle{every node}=[font=\tiny]
		\node at (0, -1.6) {$v_{\Rcal}$};
		\node at (-1.7, -0.5) {$v_{\s_1}^+$};
		\node at (-1.7, -1.5) {$v_{\s_1}^-$};
		\node at (1.7, -0.5) {$v_{\s_2}^+$};
		\node at (1.7, -1.5) {$v_{\s_2}^-$};
		\node at (0.4, 0.4) {$v_{\s_3}$};
		\begin{scope}[every node/.style={circle,thick,draw}]
		\node (A) at (0, -1) {};
		\node (B) at (-1, -0.5) {1};
		\node (C) at (-1, -1.5) {1};
		\node (D) at (1, -0.5) {1};
		\node (E) at (1, -1.5) {1};
		\node (F) at (0, 0) {4};
		\end{scope}
		
		\path (A) edge[very thick, out=210, in=30] node[above, near end] {3} (C);
		\path (A) edge[very thick, out=150, in=-30] node[above, near end] {3} (B); 
		\path (A) edge[very thick, out=45, in=195] node[above, near end] {3} (D);
		\path (A) edge[very thick, out=15, in=225]  (D);
		\path (A) edge[very thick, out=-45, in=165] node[below, near end] {3} (E); 
		\path (A) edge[very thick, out=-15, in=135] (E); 
		\path (A) edge[very thick, out=105, in=-105] node[left, near end] {1} (F); 
		\path (A) edge[very thick, out=75, in=-75] (F);

		\end{tikzpicture}}
	\end{figure}
\end{eg}

\begin{eg}
	
	Let $C_1 : y_1^2 = (x^3 - p^{12})(x^3 - 2)$ and $C_2 : y_2^2 = (x^4 - p^{20})(x^3 - 1)$ be two hyperelliptic curves, $C_h$ their composite hyperelliptic curve and $Y$ the associated bihyperelliptic curve. The chromatic cluster picture of $Y$ and the dual graph of $\Y^{\textrm{min}}_k$ are below.
	
	\begin{figure}[ht]
		\centering
		\clusterpicture
		\Root[B] {}{first}{r1};
		\Root[B] {}{r1}{r3};
		\Root[B] {}{r3}{r4};
		\Root[B] {}{r4}{r5};
		\ClusterLDName c1[][1][\s_1] = (r1)(r3)(r4)(r5);
		\Root[A] {}{c1}{r6};
		\Root[A] {}{r6}{r7};
		\Root[A] {}{r7}{r8};
		\ClusterLDName c2[][4][\s_2] = (c1)(r6)(r7)(r8), clC;
		\Root[A] {}{c2}{r9};
		\Root[A] {}{r9}{r10};
		\Root[A] {}{r10}{r11};
		\Root[B] {}{r11}{r12};
		\Root[B] {}{r12}{r13};
		\Root[B] {}{r13}{r14};
		\ClusterLDName c3[][0][\mathcal{R}] = (c2)(r9)(r10)(r11)(r12)(r13)(r14), clD;
		\endclusterpicture \qquad
		\raisebox{-3em}{\begin{tikzpicture}
		\tikzstyle{every node}=[font=\tiny]
		\node at (0, -0.8) {$v_{\Rcal}$};
		\node at (-1.1, -3) {$v_{\s_1}^+$};
		\node at (1.1, -3) {$v_{\s_1}^-$};
		\node at (-1.1, -2) {$v_{\s_2}^+$};
		\node at (1.1, -2) {$v_{\s_2}^-$};
		
		\begin{scope}[every node/.style={circle,thick,draw}]
		\node (A) at (0,-1.3) {5};
		\node (B) at (-0.5, -2) {1};
		\node (C) at (0.5, -2) {1};
		\node (D) at (-0.5, -3) {1};
		\node (E) at (0.5, -3) {1};
		\end{scope}
		
		\path (A) edge[very thick, out=0, in=90] node[right, near end] {2} (C);
		\path (A) edge[very thick, out=180, in=90] node[left, near end] {2} (B);  
		\path (B) edge[very thick, out=240, in=120] node[left] {1} (D);
		\path (C) edge[very thick, out=300, in=60] node[right] {1} (E);
		\path (B) edge[very thick, out=315, in=135] (E);
		\path (C) edge[very thick, out=225, in=45] (D);

		\end{tikzpicture}}
	\end{figure}
\end{eg}

\begin{remark}
	A perhaps more suitable notion of a bihyperelliptic curve would be any curve $Y$ which has a degree $2$ map to a hyperelliptic curve $C$, directly mirroring the definition for bielliptic curves. However, the cover $Y/\PP^1$ is not necessarily Galois in this case. Since we require  this for our purposes, we shall restrict to the case given in Definition \ref{def:bihyp}, but the more general case is also of interest.
\end{remark}

\subsection{Structure} In Section \ref{sec:ccp} we define chromatic cluster pictures, the main object we will use in our theorem to describe the minimal regular model of a bihyperelliptic curve. Some familiarity with cluster pictures is helpful: for a comprehensive definition we direct the reader to \cite{DDMM18}, and for many examples of cluster pictures in use to \cite{best2020users}. In Section \ref{sec:results} we state our main Theorem \ref{thm:main} and illustrate it with several examples. The proof of Theorem \ref{thm:main} is Section \ref{sec:proof}. The strategy is to normalise a model $\X$ of $\PP^1$ associated to $\Sigma_{\chi}$ in the function field of $Y$, resulting in a model $\Y$ of $Y$ over $\Kur$. Using the results of \cite{DDMM18}, we deduce the normalisation of $\X$ in the  function fields of $C_1$, $C_2$ and $C_h$ and use these to give an explicit description of $\Y$.

\subsection{Notation} Throughout $K$ will be a local field with ring of integers $\OO_K$ and residue field $k$ of characteristic $p > 2$. The uniformiser of $K$ will be $\pi$, and the valuation $v_K$. A fixed algebraic closure of $K$ will be denoted $\overline{K}$ and $\Kur$ will be the maximal unramified extension of $K$. For us, a \textit{model} $\mathcal{C}$ of a curve $C$ over $K$ is a flat proper $\mathcal{O}_K$-scheme whose generic fibre is isomorphic to $C$. Furthermore, this model is \textit{regular} if $\mathcal{C}$ is regular as a scheme, it is \textit{minimal} if it is dominates no other regular model, and it is \textit{semistable} if the special fibre is reduced with at worst normal crossings such that any component isomorphic to $\PP^1$ intersects the rest of the special fibre in at least two points.

In cluster pictures, red roots will be represented by spheres \raisebox{0.5em}{\clusterpicture \Root[A] {}{first}{r1}; \endclusterpicture} and blue roots by hexagons \raisebox{0.5em}{\clusterpicture \Root[B] {}{first}{r1}; \endclusterpicture}. Red clusters will be denoted with dotted lines, blue clusters by dashed lines, purple cluster by dot-dash lines and black clusters by solid lines. Sometimes, we will need an empty cluster, which will look like this: \raisebox{0.5em}{\clusterpicture \Root[] {}{first}{r1} \ClusterLDName c1[][][] = (r1); \endclusterpicture}. The relative depth of a cluster $\s \neq \Rcal$ will be written to the lower right of $\s$, and the depth of $\Rcal$ will be written to its lower right. The name of the cluster is above it.

Special fibres will be draw as their dual graphs. Components from principal clusters will be the vertices, with their genus inside. If no genus is written, the corresponding component has genus $0$. The edges between nodes are chains of rational curves, and the number next to them indicates the number of edges in the dual graph corresponding to the linking chain (which is one more than the number of $\PP^1s$ in the linking chain). An edge of length $1$ in the dual graph indicates an intersection between the components. When there are two edges between a pair of vertices, only one edge will be labelled with its length, as the other edge will always have the same length.

\subsection*{Acknowledgements} I would like to thank my supervisor Vladimir Dokchitser for his continued advice and support. I would also like to thank Holly Green for checking and correcting many earlier versions of the theorem, and Adam Morgan for his patience answering my sporadic questions. This work was supported by the Engineering and Physical
Sciences Research Council [EP/L015234/1], the EPSRC Centre for Doctoral Training in Geometry
and Number Theory (The London School of Geometry and Number Theory), and University College
London.

\section{Cluster Pictures}
\label{sec:ccp}

\subsection{The Story so Far}

A cluster picture is a set of subsets (``clusters'') of the roots of a polynomial $f$, where each cluster contains roots that are $p$-adically close together. There are many numerical invariants associated to cluster picture which are used, and in this section we recall those that we will require. Throughout this subsection, $C : y^2 = f(x) = c_f \prod_{r \in \Rcal} (x-r)$ will be a hyperelliptic curve of genus $g \geq 2$ with semistable reduction and cluster picture $\Sigma$, as in Definition \ref{def:cluster}. Denote by $\mathcal{C}$ the minimal regular model of $C$ as described in \cite[Theorem~8.5]{DDMM18}. 

The cluster picture $\Sigma$ is equivalent to an \textit{admissible collection of disks}, that is a non empty set $\mathcal{D}$ of disks $D = D_{z,d} = \{ x \in \overline{K}\, |\, v(x - z_D) \geq d_D\}$ such that $z_D \in \Kur$ (called \textit{integral disks}), $d_D \in \Z$, where $\mathcal{D}$ has a maximal element with respect to inclusion and such that if $D_1 \subseteq D_2$ then any integral disk with $D_1 \subseteq D \subseteq D_2$ is in $\mathcal{D}$. This collection of disks in turn gives rise to a model $\X$ of $\PP^1$ which separates the branch points of $C \rightarrow \PP^1$. For the precise construction, see \cite[Definition~3.8]{DDMM18}. To each disk in such a collection we define a valuation extending $v_K$.

\begin{definition}
	Let $D$ be a $p$-adic disk. Then $$\nu_D(f) = v_K(c_f) + \sum_{r \in \Rcal} \min(d_D, v_K(z_D - r)),$$ and $\omega_D(f) \in \{0, 1\}$ is such that $\omega_D(f) \equiv v_D(f) \mod 2$.
\end{definition}

Each proper cluster $\s$ is given a genus, which is the genus of the component in $\mathcal{C}$ arising from $\s$.

\begin{definition}
	Let $\s \in \Sigma$ be a proper cluster. Then the \textit{genus} of $\s$, $g(\s)$ is such that $2g(\s) + 1$ or $2g(\s) + 2$ equals the number of odd children of $\s$.
\end{definition}

To each even cluster or \textit{cotwin}, a cluster who has a child of size $2g$ whose complement is not a twin, we assign a character of $\Gal(\overline{K}/K)$, which roughly tell us whether or not $\sigma \in \Gal(\overline{K}/K)$ swaps the points at infinity of the component arising from $\s$.

\begin{definition}
	Let $\s$ be an even cluster or a cotwin. We define the character $\epsilon_{\s} : \Gal(\overline{K}/K) \rightarrow \{ \pm 1\}$ by $\epsilon_{\s}(\sigma) = \sigma(\theta_{\s^*}) / \theta_{\sigma(\s^*)}$, where for any cluster $\tfrak$, $\theta_{\tfrak}$ is a choice of square root $\sqrt{c_f \prod_{r \not \in \tfrak} (z_{\tfrak} - r)}$ and $\tfrak^*$ is the smallest cluster containing $\tfrak$ whose parent is not \"ubereven (or $\Rcal$ if no such cluster exists).
\end{definition}

Note that the definition above extends to ``empty clusters'' --- $p$-adic disks which contain no roots. This will be useful when talking about red and blue cluster pictures in the next section, which can contain empty clusters.

\subsection{Chromatic Cluster Pictures}

The combinatorial objects we will use to calculate the semistable model will be \textit{chromatic cluster pictures}.  A chromatic cluster picture is similar to a cluster picture, but instead of one polynomial we take a product of coprime polynomials $f_1 f_2$, and we colour the roots red and blue to indicate whether they are roots of $f_1$ or $f_2$. In this way, the chromatic cluster picture contains the information of the cluster pictures associated to $f_1$, $f_2$ and $f_1 f_2$.

\begin{definition}
	A \textit{chromatic cluster picture} $\Sigma_{\chi}$ is a cluster picture $\Sigma$ on a set $\Rcal$ with a colouring function $c: \Rcal \longrightarrow \{ \textrm{red}, \textrm{ blue}\}$, assigning to each root a colour.
	
	This induces a colouring on the remaining clusters as follows:
	
	\begin{enumerate}
		\item clusters with an odd number of blue children and an even number of red children (resp. an odd number of red children and an even number of blue children) are coloured blue (resp. red),
		\item clusters with an odd number of blue children and an odd number of red children are coloured purple,
		\item all other clusters are coloured black.
	\end{enumerate}
	
	where purple children are included in both the set of red \textit{and} blue clusters. Blue, red and purple clusters are called \textit{chromatic clusters}. 
	
	Clusters with purple children, or clusters with both blue and red children have \textit{polychromatic children}, whereas clusters whose only chromatic children are red or blue have \textit{monochromatic children}.
	
	We define the \textit{red} (resp. \textit{blue}) cluster picture $\Sigma_1$ (resp. $\Sigma_2$) associated to $\Sigma_{\chi}$ to be the subset of $\Sigma$ where the only clusters of size $1$ are the red (resp. blue) ones. We forget the colouring on the rest of the clusters.
\end{definition}

\begin{lemma}
	\label{lem:colparity}
	Let $\s \in \Sigma$ be a cluster. If $\s$ is odd then $\s$ is red or blue, and if $\s$ is even then $\s$ is purple or black. Furthermore, purple clusters are odd in $\Sigma_1$ and $\Sigma_2$ and even in $\Sigma$, red clusters are odd in $\Sigma_1$ and $\Sigma$ and even in $\Sigma_2$, blue clusters are odd in $\Sigma_2$ and $\Sigma$ and even in $\Sigma_1$, and finally black clusters are even in all of $\Sigma_1, \Sigma_2$ and $\Sigma$.
\end{lemma}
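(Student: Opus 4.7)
The plan is to reduce every colour assignment to two parity invariants. Set $r(\s) \equiv |\s \cap \Rcal_1| \pmod{2}$ and $b(\s) \equiv |\s \cap \Rcal_2| \pmod{2}$, where $\Rcal_1$ and $\Rcal_2$ denote the red and the blue roots respectively. I will show by induction on $|\s|$ that $\s$ is red, blue, purple, or black exactly when $(r(\s), b(\s)) = (1,0)$, $(0,1)$, $(1,1)$, or $(0,0)$ respectively. Once this characterisation is established, every assertion of the lemma follows immediately: the parity of $|\s|$ equals $r(\s) + b(\s) \pmod{2}$, so odd clusters are red or blue and even clusters are purple or black; and since ``parity of $\s$ in $\Sigma_i$'' is the parity of $|\s \cap \Rcal_i|$, this parity is exactly $r(\s)$ or $b(\s)$, giving the colour-by-colour statements.

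The base case is singletons: $\{r\}$ has colour $c(r)$, and $(r(\{r\}), b(\{r\}))$ is $(1,0)$ if $r$ is red and $(0,1)$ if $r$ is blue, matching the characterisation. For the inductive step, let $\s$ be proper with children $\s_1, \dots, \s_k$. Since the children partition $\s$, we have $r(\s) \equiv \sum_i r(\s_i) \pmod{2}$ and similarly for $b(\s)$. By the inductive hypothesis, each red, blue, purple, or black child contributes $(1,0)$, $(0,1)$, $(1,1)$, or $(0,0)$ respectively to these sums. Summing, $r(\s)$ is congruent modulo $2$ to the number of red children plus the number of purple children, which is precisely the count of ``red children'' of $\s$ under the colouring rule (where purple children are included in both chromatic sets); similarly for $b(\s)$ and ``blue children''. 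Comparing with the four rules that define the colour of $\s$ then yields the claimed values of $(r(\s), b(\s))$ at the parent.

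There is no real obstacle here: the proof is essentially a parity book-keeping. The only delicate point is to respect the convention that purple children feature in both the red and the blue children counts of their parent — this is precisely what aligns the colouring rules with the two-parity characterisation and allows the induction to propagate cleanly from children to parent.
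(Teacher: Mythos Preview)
Your proof is correct and follows the same route as the paper: induction on the size of the cluster with singletons as the base case. The paper's proof is the one-line version of yours; your introduction of the parity pair $(r(\s),b(\s))$ simply makes the bookkeeping explicit.
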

\begin{proof}
	This follows by induction on the size of the cluster, noting that it is trivially true for singletons.
\end{proof}

\begin{remark}
	The red and blue cluster pictures aren't cluster pictures (or chromatic cluster pictures) in the conventional sense --- they are cluster pictures with some additional $p$-adic disks. They can have ``empty'' clusters which contain no roots, or clusters which only contain another cluster and nothing else. However, they give rise to the same \textit{admissible collection of discs}, in the sense of \cite[Definition~3.4]{DDMM18}, as $\Sigma_h$. Therefore we can apply the results of \cite{DDMM18} to them. 
\end{remark}

We will be interested in the chromatic cluster pictures of bihyperelliptic curves. In other words, if $Y : \{y_1^2 = f_1, y_2^2 = f_2\}$ is a bihyperelliptic curve, then its chromatic cluster picture arises from colouring the roots of $f_1$ red, the roots of $f_2$ blue and the rest of the clusters according to the rules (ii) - (iv).

\begin{eg}
	
	Below (left) is the chromatic cluster picture of $Y:\{y_1^2 = (x-p^n)(x^2-1), y_2^2 = (x+p^n)(x^2-2)\}$. On the right is the red cluster picture, which is equivalent to the blue cluster picture.
	
	\begin{figure}[ht]
		\centering
		\clusterpicture
		\Root[A] {}{first}{r1};
		\Root[B] {}{r1}{r2};
		\ClusterLDName c1[][n][\s] = (r1)(r2), clB;
		\Root[A] {}{c1}{r3};
		\Root[A] {}{r3}{r4};
		\Root[B] {}{r4}{r5};
		\Root[B] {}{r5}{r6};
		\ClusterLDName c2[][0][\Rcal] = (c1)(r3)(r4)(r5)(r6), clB;
		\endclusterpicture \qquad
		\clusterpicture
		\Root[A] {}{first}{r1};
		\ClusterLDName c1[][n][\s] = (r1);
		\Root[A] {}{c1}{r3};
		\Root[A] {}{r3}{r4};
		\ClusterLDName c2[][0][\Rcal] = (c1)(r3)(r4);
		\endclusterpicture

	\end{figure}

	The cluster $\s\in \Sigma_{\chi}$ contains both an odd number of red children and an odd number of blue children (one of each), and hence is purple. The cluster $\Rcal \in \Sigma_{\chi}$ contains three red children (since the purple child counts as a red and a blue child) and three blue children and hence is also purple.
	
	The red and blue clusters are equivalent, and in both the ``cluster'' $\s$ is a $p$-adic disk containing a single root. This wouldn't be a cluster in a conventional cluster picture.
\end{eg}

\begin{eg}
	From left to right, we have the chromatic, red and blue cluster pictures of $Y:\{y_1^2 = (x^2 - p^n)(x-1), y_2^2 = x^3 - 2\}$. The cluster $\s \in \Sigma_{\chi}$ has an even number of both red and blue children and hence is black. On the other hand, $\Rcal$ has an odd number of both red and blue children and hence is coloured purple. Note that in the blue cluster picture $\s$ is an empty cluster (i.e. it is a $p$-adic disk which contains no roots).
	\begin{figure}[ht]
		\centering
\clusterpicture
\Root[A] {}{first}{r1};
\Root[A] {}{r1}{r2};
\ClusterLDName c1[][n/2][\s] = (r1)(r2);
\Root[A] {}{c1}{r3};
\Root[B] {}{r3}{r4};
\Root[B] {}{r4}{r5};
\Root[B] {}{r5}{r6};
\ClusterLDName c2[][0][\Rcal] = (c1)(r3)(r4)(r5)(r6), clB;
\endclusterpicture \qquad
\clusterpicture
\Root[A] {}{first}{r1};
\Root[A] {}{r1}{r2};
\ClusterLDName c1[][n/2][\s] = (r1)(r2);
\Root[A] {}{c1}{r3};
\ClusterLDName c2[][0][\Rcal] = (c1)(r3);
\endclusterpicture \qquad
\clusterpicture
\Root[D] {}{first}{r1};
\ClusterLDName c1[][n/2][\s] = (r1);
\Root[B] {}{c1}{r4};
\Root[B] {}{r4}{r5};
\Root[B] {}{r5}{r6};
\ClusterLDName c2[][0][\Rcal] = (c1)(r4)(r5)(r6);
\endclusterpicture \qquad

	\end{figure}
\end{eg}

\begin{eg}
	From left to right, the chromatic, red, and blue cluster picture of the bihyperelliptic curve $Y:\{y_1^2 = (x^3 - p^9)(x^3 - 2), y_2^2 = (x^4 - p^{20})(x^3 - 1)\}$. In the red cluster picture $\s_1$ is an empty cluster, and in the blue cluster picture $\s_2$ has a unique child. Neither of these are traditionally clusters.
	\begin{figure}[ht]
		\centering

	\clusterpicture
	\Root[B] {}{first}{r1};
	\Root[B] {}{r1}{r3};
	\Root[B] {}{r3}{r4};
	\Root[B] {}{r4}{r5};
	\ClusterLDName c1[][2][\s_1] = (r1)(r3)(r4)(r5);
	\Root[A] {}{c1}{r6};
	\Root[A] {}{r6}{r7};
	\Root[A] {}{r7}{r8};
	\ClusterLDName c2[][3][\s_2] = (c1)(r6)(r7)(r8), clC;
	\Root[A] {}{c2}{r9};
	\Root[A] {}{r9}{r10};
	\Root[A] {}{r10}{r11};
	\Root[B] {}{r11}{r12};
	\Root[B] {}{r12}{r13};
	\Root[B] {}{r13}{r14};
	\ClusterLDName c3[][0][\Rcal] = (c2)(r9)(r10)(r11)(r12)(r13)(r14), clD;
	\endclusterpicture \qquad
	\clusterpicture
	\Root[] {}{first}{r1};
	\ClusterLDName c1[][2][\s_1] = (r1);
	\Root[A] {}{c1}{r6};
	\Root[A] {}{r6}{r7};
	\Root[A] {}{r7}{r8};
	\ClusterLDName c2[][3][\s_2] = (c1)(r6)(r7)(r8);
	\Root[A] {}{c2}{r9};
	\Root[A] {}{r9}{r10};
	\Root[A] {}{r10}{r11};
	\ClusterLDName c3[][0][\Rcal] = (c2)(r9)(r10)(r11);
	\endclusterpicture \qquad
	\clusterpicture
	\Root[B] {}{first}{r1};
	\Root[B] {}{r1}{r3};
	\Root[B] {}{r3}{r4};
	\Root[B] {}{r4}{r5};
	\ClusterLDName c1[][2][\s_1] = (r1)(r3)(r4)(r5);
	\ClusterLDName c2[][3][\s_2] = (c1);
	\Root[B] {}{c2}{r12};
	\Root[B] {}{r12}{r13};
	\Root[B] {}{r13}{r14};
	\ClusterLDName c3[][0][\Rcal] = (c2)(r12)(r13)(r14);
	\endclusterpicture \qquad
	\end{figure}
\end{eg}

The following is an important definition, the clusters which will contribute principal components to the semistable model.

\begin{definition}
	\label{def:principal}
	Let $\s$ be a cluster. Then $\s$ is \textit{chromatically principal} if $|\s| \geq 3$, except in the following cases:
	\begin{enumerate}
		\item $\s = \Rcal = \s_1 \sqcup \s_2$, with $\s_1$ and $\s_2$ of the same colour, or one of $\s_1$ or $\s_2$ a singleton or a twin,
		\item $\s = \Rcal$ has a unique proper child $\s'$ of size $2g(C_h)$ such that either $\s'$ is purple or $\s$ and $\s'$ are both black.
	\end{enumerate}
\end{definition}

We can characterise this as follows:

\begin{lemma}
	A cluster $\s$ is chromatically principal if and only if it is principal in at least one of $\Sigma_1, \Sigma_2, \Sigma_h$.
\end{lemma}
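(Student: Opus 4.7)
My plan is a direct case analysis, exploiting the fact that the exceptions listed in Definition \ref{def:principal} are tailored to match the DDMM18 non-principality conditions simultaneously across all three sub-pictures. Recall from the remark preceding the lemma that, although $\Sigma_1, \Sigma_2$ are not conventional cluster pictures, they give the same admissible collection of discs as $\Sigma_h$ and so admit the DDMM18 notion of principal. Specifically, a cluster in $\Sigma_i$ is non-principal exactly when it has fewer than $3$ roots (counted in $\Sigma_i$), or it is the top cluster $\Rcal$ with either two proper children or a unique proper child of size $2g(C_i)+1$ or $2g(C_i)$ producing a degenerate top.

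For the ``if'' direction I would proceed by showing each chromatically non-principal case yields non-principality in all three pictures. If $|\s|<3$, then $\s$ has $\leq 2$ red roots, $\leq 2$ blue roots, and $\leq 2$ total roots, so the conclusion is immediate. If $\s = \Rcal = \s_1 \sqcup \s_2$ as in (i), then $\Rcal$ has exactly two children, which is non-principal in $\Sigma_h$; and the additional colour/size restriction is exactly what forces the same degeneracy to persist in $\Sigma_1$ and $\Sigma_2$: for instance, if both $\s_1, \s_2$ are red, then by Lemma \ref{lem:colparity} they are both even in $\Sigma_2$, so in $\Sigma_2$ the top cluster has no proper odd-child structure either; whereas if one of them is a singleton or a twin, that child degenerates to size $\leq 2$ in every $\Sigma_i$, leaving $\Rcal$ with at most one proper child there. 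Case (ii) is handled analogously: Lemma \ref{lem:colparity} translates ``$\s'$ purple'' into ``$\s'$ is odd in both $\Sigma_1$ and $\Sigma_2$'' and ``$\s, \s'$ both black'' into ``$\s'$ is even in both $\Sigma_1$ and $\Sigma_2$''. Combined with $|\s'| = 2g(C_h)$ and the arithmetic $|\s'|_1 + |\s'|_2 = |\s'|$ together with the genus relations for $C_1, C_2, C_h$, this shows $\s'$ is a unique proper child of the appropriate degenerate size in each sub-picture.

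The ``only if'' direction I would attack by contrapositive. Non-principality of $\s$ in $\Sigma_h$ pins $\s$ down to one of the DDMM18 degenerate shapes. The easy case $|\s|\le 2$ is chromatically non-principal outright. If instead $\s = \Rcal$ has two proper children, then requiring non-principality also in $\Sigma_1, \Sigma_2$ forces a colour or size restriction on $\s_1, \s_2$ that is exactly the alternatives listed in (i), for otherwise in at least one of the sub-pictures both children would remain proper and of the correct parity to make $\Rcal$ principal there. The unique-proper-child case produces (ii) in the same way, with Lemma \ref{lem:colparity} again providing the bridge between parity and colour. The main obstacle will be the careful bookkeeping in the ``empty clusters'' and ``clusters containing only one other cluster'' that appear in $\Sigma_1, \Sigma_2$; I must check that these artefacts never themselves become principal in DDMM18's sense, and that the size arithmetic involving $g(C_1), g(C_2), g(C_h)$ correctly aligns the degenerate-top conditions across the three pictures.
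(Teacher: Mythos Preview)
Your proposal is correct and takes essentially the same approach as the paper: a case analysis on Definition~\ref{def:principal}, using Lemma~\ref{lem:colparity} to translate colour conditions into parity conditions in each of $\Sigma_1,\Sigma_2,\Sigma_h$, and checking that the exceptional cases for $\Rcal$ line up with the DDMM18 non-principality exceptions in all three pictures simultaneously. The paper's proof is considerably terser than your outline (it dispatches $\s\neq\Rcal$ in one line by observing $|\s|$ is the same in $\Sigma_\chi$ and $\Sigma_h$, then sketches the $\Rcal$ cases), but the logical content is the same; your anticipated obstacle about empty clusters and single-child clusters in $\Sigma_1,\Sigma_2$ is handled implicitly via the remark that these give the same admissible collection of discs, so you need not worry there.
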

\begin{proof}
	If $\s \neq \Rcal$ this is clear as its size in $\Sigma_{\chi}$ is the same as its size in $\Sigma_h$. Therefore, suppose $\s = \Rcal$. Suppose $\s = \s_1 \sqcup \s_2$ with $\s_1$ and $\s_2$ the same colour. Then $\s_1$ and $\s_2$ have the same parity in each of $\Sigma_1, \Sigma_2, \Sigma_h$ by Lemma \ref{lem:colparity}. If $\s$ has a unique proper child of size $2g(C_h)$ then $\Sigma_1$ has a child of size $2g(C_1)$, and similarly for $\Sigma_2$ and $\Sigma_h$. The other direction can be checked similarly (for example, if $\s = \s_1 \sqcup \s_2$ with $\s_1$, $\s_2$ principal of different colours, then $\s_1$ and $\s_2$ will have opposite parities in at least one of $\Sigma_1$, $\Sigma_2$ or $\Sigma_h$).
\end{proof}

We must update our definition of cotwin to exclude a case which cannot happen for classic cluster pictures:

\begin{definition}
	A cluster $\s$ is a \textit{cotwin} if it has a child $\s'$ of size $2g(C_h)$ whose complement isn't a twin, unless $\s$ is purple and $\s'$ is black.
\end{definition}

We finish the section with a definition which will be needed in stating our main theorem.


\begin{definition}
	\label{def:gensig}
	If $\s$ is a cluster then $\s_{\chi}$ is the set of chromatic children of $\s$. If $\s, \s'$ are two proper clusters then $\sigma(\s, \s') = -1$ in the following cases:
	\begin{enumerate}
		\item $\s$ and $\s'$ each have monochromatic children of opposite colours,
		\item $\s$ is black \"ubereven and $\s'$ is black with monochromatic, blue children;
	\end{enumerate}
	and $1$ otherwise.
\end{definition}

\section{Statement of Results}
\label{sec:results}

Here we describe our main theorem: how to construct the minimal regular model $\Y^{\min}$ of $Y$ given the chromatic cluster picture of $Y$. The rough idea is that principal clusters give rise to components in the special fibre, and components of $\s$ are linked to the components of the children of $\s$. The number of components and linking chains, as well as the lengths of the linking chains, are determined by the properties of the clusters. 

\begin{thm}
	\label{thm:main}
	Let $K$ be a local field of odd residue characteristic and let $C_1$ and $C_2$ be two hyperelliptic curves over $K$. Let $C_h$ be their composite curve. Let $Y$ be the bihyperelliptic curve arising from $C_1$ and $C_2$, such that $Y$ has semistable reduction and all the depths in the chromatic cluster picture of $Y$ are integers. Then the dual graph of $\Y^{\mathrm{min}}_k$ is entirely determined by the chromatic cluster picture of $Y$.
	
	In particular, each principal cluster $\s$ contributes vertices of genus $g_{\chi}(\s)$ to the dual graph of $\Y^{\textrm{min}}_k$. If $\s$ is not \"ubereven: $1$ vertex $v_{\s}$ if $\s$ has polychromatic children and $2$ vertices $v_{\s}^+, v_{\s}^-$ if $\s$ has monochromatic children; and if \"ubereven: $2$ vertices $v_{\s}^+, v_{\s}^-$ if $\s$ has chromatic children and $4$ vertices $v_{\s}^{+,+}, v_{\s}^{+,-}, v_{\s}^{-,+}, v_{\s}^{-,-}$ if $\s$ has no chromatic children.
	
	These are linked by edges as follows:
	
	\begin{table}[ht]
		\centering
		\begin{tabular}{|c|c|c|c|c|}
			\hline
			Name & From & To & Length & Condition  \\ \hline
			$L_{\s'}^+$ & $v_{\s}^+$ & $v_{\s'}^{\sigma}$ & \multirow{2}{*}{$\frac{1}{2}\delta_{\s'}$} & \multirow{2}{*}{$\s' < \s$ both principal, $\s'$ chromatic} \\ \cline{1-3}
			$L_{\s'}^-$ & $v_{\s}^-$ & $v_{\s'}^{-\sigma}$ &  &  \\ \hline
			$L_{\s'}^{+,+}$ & $v_{\s}^{+,+}$ & $v_{\s'}^{+,+}$ & \multirow{4}{*}{$\delta_{\s'}$} & \multirow{4}{*}{$\s' < \s$ both principal, $\s'$ black} \\ \cline{1-3}
			$L_{\s'}^{+,-}$ & $v_{\s}^{+,-}$ & $v_{\s'}^{+,-}$ &  &  \\ \cline{1-3}
			$L_{\s'}^{-,+}$ & $v_{\s}^{-,+}$ & $v_{\s'}^{-,+}$ &  &  \\ \cline{1-3}
			$L_{\s'}^{-,-}$ & $v_{\s}^{-,-}$ & $v_{\s'}^{-,-}$ &  &  \\ \hline
			
			$L_{\tfrak}$ & $v_{\s}^+$ & $v_{\s}^-$ & $\delta_{\tfrak}$ & $\tfrak < \s$, $\tfrak$ chromatic twin, $\s$ principal \\ \hline	
			$L_{\tfrak}^+$ & $v_{\s}^{+,+}$ & $v_{\s}^{\sigma,-\sigma}$ & \multirow{2}{*}{$2\delta_{\tfrak}$} & \multirow{2}{*}{$\tfrak < \s$, $\tfrak$ black twin, $\s$ principal} \\ \cline{1-3}
			$L_{\tfrak}^-$ & $v_{\s}^{-,-}$ & $v_{\s}^{-\sigma,\sigma}$ &  & \\ \hline
		\end{tabular}
	\end{table}
	
	where $\sigma = \sigma(\s, \s')$; $v_{\s}^{\pm,+} = v_{\s}^{\pm,-} = v_{\s}^{\pm}$ if $\s$ is non-\"ubereven with monochromatic red children; $v_{\s}^{+,\pm} = v_{\s}^{-,\pm} = v_{\s}^{\pm}$ if $\s$ is non-\"ubereven with has monochromatic blue children; $v_{\s}^{\pm, \pm} = v_{\s}^+$, $v_{\s}^{\pm, \mp} = v_{\s}^-$ if $\s$ is \"ubereven with chromatic children; and $v_{\s}^+ = v_{\s}^- = v_{\s}$ if $\s$ is non-\"ubereven with polychromatic children.
	
	\pagebreak
	
	Moreover, if $\Rcal$ is not principal\footnote{Recall that if $\Rcal$ is purple and has a unique proper child $\s'$ of size $2g(C_h)$ which is black, then $\Rcal$ is principal and doesn't fall in this category.}, then there are the following additional edges:
	
	\begin{table}[ht]
		\centering
		\begin{tabular}{|c|c|c|c|c|}
			\hline
			Name & From & To & Length & Condition  \\ \hline
			$L_{\tfrak}$ & $v_{\s}^+$ & $v_{\s}^-$ & $\delta_{\s}$ & $\s < \tfrak$, $\tfrak$ cotwin, $\s$ purple \\ \hline	
			$L_{\tfrak}^+$ & $v_{\s}^{+,+}$ & $v_{\s}^{\sigma,-\sigma}$ & \multirow{2}{*}{$2\delta_{\s}$} & \multirow{2}{*}{$\s < \tfrak$, $\tfrak$ cotwin, $\s$ black} \\ \cline{1-3}
			$L_{\tfrak}^-$ & $v_{\s}^{-,-}$ & $v_{\s}^{-\sigma, \sigma}$ &  &  \\ \hline	
			
			$L_{\s,\s'}^+$ & $v_{\s}^+$ & $v_{\s'}^+$ & \multirow{2}{*}{$\frac12 (\delta_{\s} + \delta_{\s'})$} & \multirow{2}{*}{$\Rcal = \s \sqcup \s'$, $\s, \s'$ both principal, same chromatic colour} \\ \cline{1-3}
			$L_{\s,\s'}^-$ & $v_{\s}^-$ & $v_{\s'}^-$ &  &  \\ \hline
			
			$L_{\s,\s'}^{+,+}$ & $v_{\s}^{+,+}$ & $v_{\s'}^{+,+}$ & \multirow{4}{*}{$\delta_{\s} + \delta_{\s'}$} & \multirow{4}{*}{$\Rcal = \s \sqcup \s'$, $\s, \s'$ both principal, black} \\ \cline{1-3}
			$L_{\s,\s'}^{+,-}$ & $v_{\s}^{+,-}$ & $v_{\s'}^{+,-}$ &  &  \\ \cline{1-3}
			$L_{\s,\s'}^{-,+}$ & $v_{\s}^{-,+}$ & $v_{\s'}^{-,+}$ &  &  \\ \cline{1-3}
			$L_{\s,\s'}^{-,-}$ & $v_{\s}^{-,-}$ & $v_{\s'}^{-,-}$ &  &  \\ \hline
			
			$L_{\tfrak}$ & $v_{\s}^+$ & $v_{\s}^-$ & $\delta_{\s} + \delta_{\tfrak}$ & $\Rcal = \s \sqcup \tfrak$, $\s$ principal, $\tfrak$ twin, both purple \\ \hline
			
			$L_{\tfrak}^+$ & $v_{\s}^{+,+}$ & $v_{\s}^{\sigma,-\sigma}$ & \multirow{2}{*}{$2(\delta_{\s} + \delta_{\tfrak})$} & \multirow{2}{*}{$\Rcal = \s \sqcup \tfrak$, $\s$ principal, $\tfrak$ twin, both black} \\ \cline{1-3}
			$L_{\tfrak}^-$ & $v_{\s}^{-,-}$ & $v_{\s}^{-\sigma,\sigma}$ &  & \\ \hline	
		\end{tabular}
	\end{table}
\end{thm}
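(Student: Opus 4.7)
The plan is to follow the strategy outlined in the introduction: realise $\Y$ as the normalisation of a suitable model $\X$ of $\PP^1_K$ in the function field $K(Y)$, and analyse this normalisation component-by-component using the $C_2 \times C_2$ Galois structure together with the hyperelliptic results of \cite{DDMM18} applied to $C_1$, $C_2$ and $C_h$.

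First, I would construct the model $\X$ of $\PP^1_{\Kur}$ from the admissible collection of $p$-adic discs associated to the chromatic cluster picture (equivalently, to $\Sigma_h$). Each cluster $\s$ contributes a rational component $X_\s$, and parent-child relations $\s' < \s$ are realised by linking chains of $\PP^1$s of length controlled by $\delta_{\s'}$. Normalising $\X$ in the three degree-two intermediate fields $K(C_1)$, $K(C_2)$, $K(C_h)$ produces models $\mathcal{C}_1, \mathcal{C}_2, \mathcal{C}_h$ whose structure is given directly by \cite[Theorem~8.5]{DDMM18} applied to $\Sigma_1, \Sigma_2, \Sigma_h$ respectively, with the small bookkeeping adjustment needed to accommodate the empty or singleton-only clusters that appear when one passes from $\Sigma_\chi$ to a monochromatic picture. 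The colour of $\s$ records which of $f_1, f_2, f_1 f_2$ reduces to a square in the function field of $X_\s$: black means all three, purple means none, and chromatic means exactly one.

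Next, I would take the further normalisation to reach $\Y$. Since $K(Y)/K(x)$ is biquadratic, for each $X_\s$ the number of components of $\Y$ lying over it is determined by the simultaneous splitting behaviour of the three subcovers, giving $1$, $2$ or $4$ components matching the case analysis in the statement (polychromatic, monochromatic or chromatic-\"ubereven, and black-\"ubereven respectively). The index labels $v_\s^\pm$ and $v_\s^{\pm,\pm}$ are fixed by tracking the two generating involutions of $\Gal(K(Y)/K(x))$ acting on the $\theta$-type square roots from the definition of $\epsilon_\s$. A direct genus computation on each component, or equivalently Riemann--Hurwitz applied to the composite $\Y \to \X$ restricted to $X_\s$, then produces $g_\chi(\s)$; the exceptional subtraction in Definition \ref{def:genusintro} reflects extra infinity-branching when $f_1$ and $f_2$ both have even degree. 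For the linking chains between a parent $\s$ and a child $\s'$, I would normalise the chain of $\PP^1$s in $\X$ between $X_\s$ and $X_{\s'}$: a chromatic $\s'$ produces chains of length $\frac{1}{2}\delta_{\s'}$ and a black $\s'$ produces chains of length $\delta_{\s'}$, with the number of chains and their length dictated by which of the three intermediate covers ramify or split along the chain. The attachment pattern $v_\s^\pm \to v_{\s'}^{\pm \sigma}$ is governed by $\sigma(\s, \s')$, which records how the chosen square-root branches on either end of the chain match up---precisely the sign subtlety of \cite[Theorem~8.5]{DDMM18} carried out in parallel for the two generators of $\Gal(Y/\PP^1)$. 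Twins, cotwins and the non-principal-$\Rcal$ cases are handled by the same analysis, with the factor-of-$2$ lengths in the final table arising because a black twin or cotwin contributes double loops in each intermediate $\mathcal{C}_i$ that must be glued consistently over $Y$.

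The main obstacle I expect is the careful bookkeeping of the $\pm$-labels and of the function $\sigma(\s, \s')$: the individual signs on each $\mathcal{C}_i$ are only defined up to a choice of $\theta_\s$, and matching them coherently across all three subcovers so that the $C_2 \times C_2$-action is consistent, and so that the resulting model is minimal (i.e.\ that no residual $(-1)$-curves need to be contracted), is the delicate combinatorial core of the proof.
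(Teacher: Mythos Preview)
Your proposal is correct and follows essentially the same approach as the paper: build $\X$ from the admissible collection of discs, normalise in $K(C_1)$, $K(C_2)$, $K(C_h)$ and $K(Y)$, and read off the component count, genera, and linking structure of $\Y_k$ from the three intermediate models $\mathcal{C}_i$ via the $C_2\times C_2$ action, with the $\sigma(\s,\s')$ bookkeeping as the main case-check. The only points the paper makes more explicit than your outline are a standalone lemma verifying that the normalisation $\Y$ is actually regular (via a criterion of Srinivasan applied to the divisor of $f_2$ on $\mathcal{C}_1$) and the computation of linking-chain lengths through the harmonic-morphism formalism for dual graphs, but otherwise your plan matches the paper's proof step for step.
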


\begin{remark}
	\label{rmk:twincomps}
	Possibly after a totally ramified extension, we can still think of proper, not principal clusters as contributing components to the special fibre. However, these components may not be principal In other words, they contribute components isomorphic to $\PP^1$ which intersects the rest of the special fibre in precisely two places. So for example, a loop $L_{\tfrak}$ arising from a chromatic twin can be thought of as a component $v_{\tfrak}$ with two linking chains $L_{\tfrak}^+, L_{\tfrak}^-$ to $v_{P(\tfrak)}$ of length $ \frac12 \delta_{\tfrak}$ (i.e., the linking chains arising in the first two rows of the first table). We have chosen not to state our theorems in this way since the component $v_{\tfrak}$ sometimes only appears in the minimal regular model after a totally ramified extension (and has multiplicity $2$ otherwise), but for the purposes of our proof we will usually take this point of view. That is, we will go to a totally ramified extension, treat all proper clusters as if they give us components, and then use Lemma \ref{lem:ramextmodel} to move between totally ramified extensions.
\end{remark}

\begin{thm}
	\label{thm:frob}
	Denote the Frobenius automorphism by $\Frobtwo$. It acts on $\Y^{\min}_k$ in the following way:
	
	\begin{enumerate}
		\item $\Frobtwo(\Gamma_{\s}^{\pm}) = \Gamma_{\Frobtwo(\s)}^{\pm\epsilon_{\s,i}(\Frobtwo)}$ \textrm{for $\s$ with chromatic children, $i \in \{1,2,h\}$ with $\s \in \Sigma_i$ \"ubereven},
		\item $\Frobtwo(\Gamma_{\s}^{\pm,\pm}) = \Gamma_{\Frobtwo(\s)}^{\pm\epsilon_{\s,2}(\Frobtwo),\pm\epsilon_{\s,1}(\Frobtwo)}$ \textrm{for $\s$ \"ubereven with no chromatic children},
		\item $\Frobtwo(L_{\s}^{\pm}) = \Gamma_{\Frobtwo(\s)}^{\pm\epsilon_{\s,i}(\Frobtwo)}$ \textrm{for $\s$ chromatic, $i \in \{1,2,h\}$ with with $\s \in \Sigma_i$ even},
		\item $\Frobtwo(L_{\s}^{\pm,\pm}) = L_{\Frobtwo(\s)}^{\pm\epsilon_{\s,2}(\Frobtwo),\pm\epsilon_{\s,1}(\Frobtwo)}$ \textrm{for $\s$ black},
		\item $\Frobtwo(L_{\tfrak}) = \epsilon_{\tfrak, h}(\Frobtwo) L_{\Frobtwo(\tfrak)}$ for $\tfrak$ a chromatic twin, where $-L$ denotes $L$ with the opposite orientation,
		\item $\Frobtwo(L_{\tfrak}^{\pm}) = \epsilon_{\tfrak, j}(\Frobtwo) L_{\Frobtwo(\tfrak)}^{\pm \epsilon_{\tfrak, i}(\Frobtwo)}$ for $\tfrak$ a black twin, $i,j \in \{1, 2\}$ such that $\tfrak$ is empty in $\Sigma_i$ and $i \neq j$.
	\end{enumerate}
\end{thm}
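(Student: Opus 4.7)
The plan is to lift the Frobenius action from the three hyperelliptic quotients $C_1$, $C_2$, $C_h$ to the bihyperelliptic curve $Y$, exploiting that $Y\to\PP^1$ is a $C_2\times C_2$-Galois cover. Since $\Y$ is constructed (as outlined in Section \ref{sec:proof}) by normalising the toric model $\X$ of $\PP^1$ attached to $\Sigma_\chi$ in $K(Y)$, every component of $\Y_k$ lies above a unique component $v_\s$ of $\X_k$, and its labelling by signs $\pm$ or $(\pm,\pm)$ records exactly the sign ambiguities of the square roots $y_1=\sqrt{f_1}$ and $y_2=\sqrt{f_2}$ that appear in the normalisation step. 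These are the same sign ambiguities that govern the labelling of components of the DDMM18 minimal regular models $\mathcal{C}_1,\mathcal{C}_2,\mathcal{C}_h$ of the three hyperelliptic quotients.

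The first step is a compatibility lemma identifying the $\pm$ labels on $\Y$ with the $\pm$ labels inherited from $\mathcal{C}_1,\mathcal{C}_2,\mathcal{C}_h$. Using Lemma \ref{lem:colparity}: if $\s$ is non-\"ubereven with monochromatic children then $\s$ is \"ubereven in precisely one of $\Sigma_1,\Sigma_2$ (the one matching the missing colour) and even in $\Sigma_h$, so the single $\pm$ on $\Y$ is inherited from that $\mathcal{C}_i$; if $\s$ is \"ubereven with chromatic children then $\s$ is \"ubereven in exactly one of $\Sigma_1,\Sigma_2,\Sigma_h$ (again determined by colour parity), and the $\pm$ is inherited from the corresponding $\mathcal{C}_i$; and if $\s$ is \"ubereven with no chromatic children (i.e.\ black \"ubereven) then $\s$ is \"ubereven in all three of $\Sigma_1,\Sigma_2,\Sigma_h$, so the pair $(\pm,\pm)$ records the independent choices of $y_1$ and $y_2$ --- the signs inherited from $\mathcal{C}_2$ and $\mathcal{C}_1$ respectively, matching the convention in the theorem. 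Applying the DDMM18 formula $\Frobtwo(\Gamma_\s^\pm)=\Gamma_{\Frobtwo(\s)}^{\pm\epsilon_{\s,i}(\Frobtwo)}$ to the relevant $\mathcal{C}_i$ then yields parts (1) and (2) of the theorem.

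Parts (3)--(6) follow by applying the same strategy to the chains of $\PP^1$s connecting components in $\Y$: each chain lies above a corresponding chain in some $\mathcal{C}_i$, and is labelled by $\pm$ or given an orientation depending on whether the associated cluster is chromatic or black in $\Sigma_i$. Frobenius reverses the orientation of such a chain exactly when it swaps its two end components, contributing a factor of $\epsilon_{\tfrak,i}(\Frobtwo)$. For a chromatic twin the chain comes from a loop in $\mathcal{C}_h$, giving part (5); for a black twin it splits into a pair of chains whose labels and orientations are controlled by the two characters associated to the two curves $C_1, C_2$ (the twin being an empty disk in the third), giving part (6).

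The main obstacle will be pinning down the correct convention for which index $i\in\{1,2,h\}$ appears in each case and verifying that the signs match across the three normalisations. In particular, for a black twin the relevant cluster is an empty disk in one of $\Sigma_1,\Sigma_2$, so the character $\epsilon_{\tfrak,i}$ must be interpreted via the extension to empty disks noted after the definition of $\epsilon_\s$. A careful bookkeeping exercise using Lemma \ref{lem:colparity} and the parity of $|\s|$ in each of the three cluster pictures is what identifies the correct $i$ and ultimately yields the formulas stated in the theorem.
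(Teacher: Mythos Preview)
Your proposal is correct and follows essentially the same approach as the paper: both arguments deduce the Frobenius action on $\Y_k$ from the known DDMM18 action on the intermediate models $\mathcal{C}_1,\mathcal{C}_2,\mathcal{C}_h$ by using that Frobenius commutes with the quotient maps, and then do a case analysis on the colour/\"ubereven type of $\s$ to identify which $\Sigma_i$ supplies the relevant $\epsilon_{\s,i}$. The paper additionally remarks at the outset that passing from $\Y_k$ to $\Y^{\min}_k$ only contracts components inside linking chains, so the Frobenius action is unchanged; you may wish to make this reduction explicit as well.
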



\begin{remark}
	For simplicity of proof, we have added the technical condition that the depth of all clusters are integers, even though there exist semistable curves with clusters of half integer depth (for example, any bihyperelliptic curve where $C_1$ is the elliptic curve given by $y_1^2 = (x^2 - p^3)(x-1))$. However, this is only a very mild restriction, since this condition can always be attained by going to the ramified extension of $K$ of degree $2$. The dual graph of $\Y_k$ over $K$ is then the same as the dual graph of $\Y_k$ over $K(\sqrt{\pi})$, except the lengths of all linking chains are halved (see Lemma \ref{lem:ramextmodel}).
\end{remark}


\begin{eg}
	\label{eg:twinchrom}
	Recall example \ref{eg:twinintrochrom}. Let $C_1: y_1^2 = (x-p^n)(x^2 - 1)$ and $C_2 : y_2^2 = (x+p^n)(x^2 - 2)$ be elliptic curves, with composite curve $C_h: y_h^2 = (x^2 - p^{2n})(x^2 - 1)(x^2 - 2)$ and associated bielliptic curve $Y$. The chromatic cluster picture of $Y$ and the dual graph of $\Y^{\textrm{min}}_k$ are below.
	\begin{figure}[ht]
		\centering
		\clusterpicture
		\Root[A] {}{first}{r1};
		\Root[B] {}{r1}{r2};
		\ClusterLDName c1[][n][\s] = (r1)(r2), clB;
		\Root[A] {}{c1}{r3};
		\Root[A] {}{r3}{r4};
		\Root[B] {}{r4}{r5};
		\Root[B] {}{r5}{r6};
		\ClusterLDName c2[][0][\Rcal] = (c1)(r3)(r4)(r5)(r6), clB;
		\endclusterpicture \quad \quad
		\raisebox{-1.8em}{\begin{tikzpicture}
			\tikzstyle{every node}=[font=\tiny]
			\node at (0.7, 0) {$v_{\Rcal}$};
			\begin{scope}[every node/.style={circle,thick,draw}]
			\node (A) at (0,0) {3};
			\end{scope}
			
			\path (A) edge[very thick, out=150, in=210, looseness=8] node[left] {n} (A);
			
			\end{tikzpicture}}
	\end{figure}
	
	In order to understand the Frobenius automorphism $\phi$, we note that the twin $\s$ is only even in $\Sigma_h$, not $\Sigma_1$ or $\Sigma_2$, so the action of Frobenius $\phi$ on the loop is $\phi(L_{\s}) = \epsilon_{\s,h}(\phi)L_{\s}$. We can calculate $\epsilon_{\s,h} = \left( \frac2p \right)$ and so the loop is inverted if and only if $2$ is not a quadratic residue mod $p$.
\end{eg}

\begin{eg}
	\label{eg:twinblack}
	Recall example \ref{eg:introtwinblack}. Let $C_1 : y_1^2 = (x^2 - p^{2n})(x-1)$ and $C_2 : y_2^2 = x^3 - 2$ be two elliptic curves with composite curve $C_h: y_h^2 = (x^2 - p^n)(x-1)(x^3 - 2)$ and associated bielliptic curve $Y$. The chromatic cluster picture of $Y$ and the dual graph of $\Y^{\textrm{min}}_k$ are below.
	
	\begin{figure}[ht]
		\centering
		\clusterpicture
		\Root[A] {}{first}{r1};
		\Root[A] {}{r1}{r2};
		\ClusterLDName c1[][n][\s] = (r1)(r2);
		\Root[A] {}{c1}{r3};
		\Root[B] {}{r3}{r4};
		\Root[B] {}{r4}{r5};
		\Root[B] {}{r5}{r6};
		\ClusterLDName c2[][0][\Rcal] = (c1)(r3)(r4)(r5)(r6), clB;
		\endclusterpicture\quad \quad
		\raisebox{-3em}{\begin{tikzpicture}
			
			\tikzstyle{every node}=[font=\tiny]
			\node at (0.7, -1) {$v_{\Rcal}$};
			\begin{scope}[every node/.style={circle,thick,draw}]
			\node (A) at (0,-1) {2};
			\end{scope}
			
			\path (A) edge[very thick, out=120, in=150, looseness=16] node[left] {2n} (A);
			\path (A) edge[very thick, out=210, in=240, looseness=16] node[left] {2n} (A);  
			
			\end{tikzpicture}}
		
	\end{figure}
	
	Frobenius, $\phi$ acts on the loops of $\s$ as $\Frobtwo(L_{\s}^{\pm}) = \epsilon_{\s,1} L_{\s}^{\pm \epsilon_{\s,2}(\phi)}$, since $\s$ is empty in $\Sigma_2$. We can calculate $\epsilon_{\s,1}(\Frobtwo) = \left( \frac{-1}{p} \right)$ and $\epsilon_{\s,2} = \left( \frac{-2}{p} \right)$, and so the action of Frobenius swaps the loops if $-2$ is not a quadratic residue mod $p$, and the loops are inverted if $-1$ is not a quadratic residue mod $p$.
\end{eg}

\begin{eg}
	\label{eg:blowdown}
	Let $C_1 : y_1^2 = ((x+p^2)^2 - p^{14})(x-2+p^3)$ and $C_2 : y_2^2 = ((x-p^2)^2 - p^12)(x-2-p^3)$ be hyperelliptic curves over $K$ and let $Y$ be their associated bihyperelliptic curve. The chromatic cluster picture of $Y$ consists of the \"ubereven top cluster $\Rcal$ with chromatic children, the \"ubereven cluster $\s$ with no chromatic children, and three twins $\tfrak_1, \tfrak_2$ and $\tfrak_3$, the first two black with monochromatic children and the latter chromatic with polychromatic children. Note that $\Rcal$ is principal despite being the disjoint union of two clusters as its two children are purple and black. The most subtle part of theorem is illustrated here: that the components arising from $\Rcal$, and the loops arising from $\tfrak_1$ and $\tfrak_2$ link to different pairs of $\Gamma_{\s}^{+,+}, \Gamma_{\s}^{+,-}, \Gamma_{\s}^{-,+}$ and $\Gamma_{\s}^{-,-}$; for example, $\Gamma_{\Rcal}^{+}$ links to $\Gamma_{\s}^{+,+}$ and $\Gamma_{\s}^{-,-}$, whereas $L_{\tfrak_1}^+$ links to $\Gamma_{\s}^{+,+}$ and $\Gamma_{\s}^{+,-}$. This is because $\Rcal$ is \"ubereven with chromatic children whereas $\tfrak_1$ is not \"ubereven with monochromatic (red) children.
	
	\begin{figure}[ht]
		\centering
		
		\clusterpicture
		\Root[A] {} {first} {r1};
		\Root[A] {} {r1} {r2};
		\ClusterLDName c1[][5][\mathfrak{t}_1] = (r1)(r2);
		\Root[B] {} {c1} {r3};
		\Root[B] {} {r3} {r4};
		\ClusterLDName c2[][4][\mathfrak{t}_2] = (r3)(r4);
		\ClusterLDName c3[][2][\mathfrak{s}] = (c1)(c2);
		\Root[A] {} {c3}{r5};
		\Root[B] {} {r5}{r6};
		\ClusterLDName c4[][3][\tfrak_3] = (r5)(r6), clB;
		\ClusterLDName c5[][0][\Rcal] = (c3)(c4), clB;
		\endclusterpicture \quad \quad
		\raisebox{-5em}{\begin{tikzpicture}
			\tikzstyle{every node}=[font=\tiny]
			
			\node at (1.5, 0.4) {$\Gamma_{\Rcal}^-$};
			\node at (-1.4, 0.4) {$\Gamma_{\Rcal}^+$};
			\node at (-2.5, -0.8) {$\Gamma_{\s}^{+,+}$};
			\node at (-1, -0.8) {$\Gamma_{\s}^{-,-}$};
			\node at (1.1, -0.8)  {$\Gamma_{\s}^{+,-}$};
			\node at (2.5, -0.8)  {$\Gamma_{\s}^{-,+}$};
			
			\begin{scope}[every node/.style={circle,thick,draw}]
			\node (A) at (-1.1,0) {};
			\node (A2) at (1.1,0) {};
			\node (C) at (-1.8, -1) {};
			\node (D) at (-0.4, -1) {};
			\node (E) at (0.4, -1) {};
			\node (F) at (1.8, -1) {};
			\end{scope}
			
			\path (A) edge[very thick, out=60, in=120] node[above] {$L_{\tfrak_3}$} (A2);
			\path (A) edge[very thick, out=-180, in=90] (C);
			\path (A2) edge[very thick, out=0, in=90] (F);
			\path (A2) edge[very thick, out=180, in=90] (E);
			\path (A) edge[very thick, out=0, in=90] (D);
			\path (C) edge[very thick, out=-60, in=-120] node[below, yshift=2.7] {$L^+_{\tfrak_1}$} (E);
			\path (C) edge[very thick, out=-90, in=-90] node[below] {$L^+_{\tfrak_2}$} (F);
			\path (D) edge[very thick, out=-60, in=-120] node[below, yshift=2.7] {$L^-_{\tfrak_1}$} (F);
			\path (D) edge[very thick, out=30, in=150] node[above] {$L^-_{\tfrak_2}$} (E);
			
			\end{tikzpicture}}
	\end{figure}
\end{eg}

\begin{remark}
	The above is an example of a curve whose minimal regular model has a special fibre with a non-planar dual graph (its dual graph is a $K_{3,3}$). It is in fact an example of minimal genus, since the special fibre is totally degenerate (all of the components have genus $0$). 
\end{remark}

\section{Proof}
\label{sec:proof}

The strategy of proof will be as follows: let $\X$ be the minimal model of $\PP^1$ which separates the branch points of the map $Y \rightarrow \PP^1$. We construct such a model from $\Sigma_h$ following the techniques of \cite[Sections~3-4]{DDMM18}. Normalising $\X$ in the function field $K(Y)$ gives a regular model $\Y$ of $Y$, which results in a semistable model $\Y^{\textrm{min}}$ after blowing down components of multiplicity greater than $1$. We also obtain models $\mathcal{C}_1, \mathcal{C}_h, \mathcal{C}_2$ of $C_1, C_h$ and $C_2$ respectively by normalising in the appropriate function fields. The special fibres of these intermediate models are computed using the result of \cite[Sections~5-6]{DDMM18}.

\begin{notation}
	There are many cases where we shall wish to refer to some object associated to a cluster $\s$ for each of the curves $\PP^1, C_1, C_2, C_h$ and $Y$. For example, we may wish to refer to the components arising from $\s$. In this case, the component(s) in $\Y_k$ will appear without subscript: $\Gamma_{\s}$, and those in $\X_k$ (resp. $\mathcal{C}_{1,k}, \mathcal{C}_{h,k}, \mathcal{C}_{2,k}$) will be denoted $\Gamma_{\s, \PP^1}$ (resp. $\Gamma_{\s,1}, \Gamma_{\s,h}, \Gamma_{\s,2}$).
\end{notation}

\begin{remark}
	The models $\mathcal{C}_1, \mathcal{C}_2$ and $\mathcal{C}_h$ are not necessarily \textit{minimal} models, as they come from the red (resp. blue resp. chromatic) cluster picture of $Y$ - the admissible collection of disks arising from these is \textit{not} in general the same as that arising from the cluster picture of $C_1$ (resp. $C_2$, $C_h$).
\end{remark}

\begin{lemma}
	\label{lem:ramextmodel}
	Let $C$ be a curve over $\Kur$ with semistable reduction and let $\mathcal{C}$ be a semistable model of $C$. Let $L/K$ be a totally ramified extension of degree $e$. Then $\mathcal{C} \times_{\OO_K} \OO_L$ is a semistable model of $C \times_K L$ with the same dual graph as $\mathcal{C}$, but with the lengths of all edges multiplied by $e$.
\end{lemma}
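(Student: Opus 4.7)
The plan is to verify three things in order: that $\mathcal{C}_L := \mathcal{C} \times_{\OO_K} \OO_L$ is a proper flat model of $C \times_K L$ over $\OO_L$, that it is semistable in the paper's sense, and that its dual graph has each edge length multiplied by $e$.

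The first two points are immediate. Base change preserves flatness and properness, so $\mathcal{C}_L$ is a proper flat $\OO_L$-scheme whose generic fibre is $C \times_K L$. Since $L/K$ is totally ramified, $k_L = k$, and hence the special fibre
$$(\mathcal{C}_L)_{k_L} = \mathcal{C}_k \times_k k_L \cong \mathcal{C}_k$$
is identical to $\mathcal{C}_k$ as a $k$-scheme. In particular it is reduced with normal crossings, and the set of irreducible components together with the pattern of their pairwise intersections is unchanged, so the underlying combinatorial graph carries over verbatim.

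What remains is the statement about edge lengths, which is a purely local calculation at each node. Recall the convention that an edge of length $n$ between two components corresponds to a node $p$ at which, locally in the total space, $\mathcal{C}$ has equation $xy - u\pi^n = 0$ for some unit $u$ (an $A_{n-1}$ singularity, whose minimal resolution contributes the chain of $n - 1$ intermediate $\PP^1$s making up the linking chain in the minimal regular model). Writing $\pi = w\pi_L^e$ with $w \in \OO_L^\times$, base change to $\OO_L$ turns this equation into $xy - u'\pi_L^{ne} = 0$ for a unit $u'$, i.e.\ an $A_{ne-1}$ singularity, corresponding to an edge of length $ne$. Applying this at every node proves the lemma.

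The main (and essentially only) subtlety is reconciling the two-level convention for the dual graph used in the paper: vertices record all principal components, while an edge of length $n$ between two such components secretly encodes an $A_{n-1}$ singularity of the total space, equivalently the chain of $n - 1$ rational components inserted by the minimal regular resolution. Once this is made explicit, the whole argument reduces to the single local calculation $\pi \mapsto w\pi_L^e$ at each node, with no further input needed.
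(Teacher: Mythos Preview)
Your proof is correct and follows essentially the same route as the paper. The paper's proof is a two-line citation to Liu's book (Corollaries 10.3.36 and 10.3.25), noting that the thickness of each double point multiplies by $e$ under a totally ramified base change; your local calculation $xy - u\pi^n \leadsto xy - u'\pi_L^{ne}$ is precisely that thickness statement made explicit, and your observations about flatness, properness, and the special fibre being unchanged (since $k_L = k$) are the content behind the first citation. So there is no genuine difference in strategy, only in how much is spelled out versus delegated to Liu.
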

\begin{proof}
	This follows from \cite[Corollaries~10.3.36,10.3.25]{liu2006algebraic}, noting that the thickness (\cite[Definition~10.3.23]{liu2006algebraic}) of all double points multiplies by $e$ after extending the field.
\end{proof}

This allows us to happily move between ramified extensions of $K$ when describing the semistable model, which will prove invaluable.

\begin{lemma}
	Let $Y/K$ be a bihyperelliptic curve with semistable reduction. Then $\Y$, the normalisation of $\X$ in $K(Y)$, is a proper regular model of $Y$.
\end{lemma}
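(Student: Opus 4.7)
The lemma makes three separate assertions about $\Y$: it is proper over $\OO_K$, it has $Y$ as its generic fibre, and it is regular. The first two are formal. Since $\X$ is of finite type over the excellent ring $\OO_K$ and $K(Y)/K(\PP^1)$ is a finite separable extension, the normalisation $\Y \to \X$ is finite; composing with the proper morphism $\X \to \operatorname{Spec}\OO_K$ gives $\Y$ proper. Normalisation commutes with flat base change, so the generic fibre of $\Y$ is the normalisation of $\X_K = \PP^1_K$ in $K(Y)$, which is $Y$ itself (as $Y$ is smooth and hence normal).

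For regularity I would factor through $\mathcal{C}_h$. The intermediate normalisation $\mathcal{C}_h$ of $\X$ in $K(C_h)$ is regular by \cite[Sections~5--6]{DDMM18}, since $\X$ was constructed as the model associated to the admissible collection of disks of $\Sigma_h$ and therefore separates the branch points of $C_h \to \PP^1$. It then remains to analyse the normalisation of $\mathcal{C}_h$ in the degree-two Kummer extension $K(Y) = K(C_h)(\sqrt{f_1})$. A direct local check shows that $Y \to C_h$ is \'etale on the generic fibre: at a point $(\alpha, 0) \in C_h$ with $\alpha$ a root of $f_1$, the function $y_h$ is a uniformiser and $f_1 = y_h^2 \cdot (\text{unit})$ has even valuation. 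Hence the only possible non-regular points of $\Y$ lie over the special fibre.

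Working locally, for each closed point $x \in \mathcal{C}_h$ the completed local ring $\widehat{\OO}_{\mathcal{C}_h, x}$ is either of the form $\OO_{\Kur}[[t]]$ (smooth point of the special fibre) or of the form $\OO_{\Kur}[[u, v]]/(uv - \pi^n)$ (node). Because $p > 2$, the extension generated by $\sqrt{f_1}$ is tame and Abhyankar's Lemma applies: the normalisation is regular provided the square-free part of $f_1$ in $\widehat{\OO}_{\mathcal{C}_h, x}$ cuts out a regular (normal crossings) divisor. The main obstacle is the parity book-keeping required to verify this at nodes, where one must track the valuations of $f_1$ along each of the two branches separately. The integer-depth hypothesis on $\Sigma_\chi$ together with the semistability of $Y$ forces these valuations to have the correct parities so that no obstructed local configuration arises; this is precisely the combinatorial content that is later packaged into the chromatic colouring and drives the casework in the rest of the paper.
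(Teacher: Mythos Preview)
Your route through $\mathcal{C}_h$ differs from the paper's, which instead factors the normalisation through $\mathcal{C}_1$ and studies the divisor of $f_2$ there via the criterion of \cite[Lemma~2.1]{srinivasan2015conductors}. Your observation that $Y \to C_h$ is unramified above the roots of $f_1$ and $f_2$ is correct and does tidy up the horizontal part of the branch locus (though you should note that $Y \to C_h$ may still ramify at infinity when $\deg f_1$ and $\deg f_2$ are both odd). The formal claims about properness and the generic fibre are fine.

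The gap is your final paragraph, which is where the actual content of the lemma sits. You assert that at every node of $\mathcal{C}_{h,k}$ the parities of $\nu(f_1)$ along the two branches are compatible with regularity, but you do not verify this; you instead appeal to ``the integer-depth hypothesis together with the semistability of $Y$''. Neither is available to you here: the lemma carries no integer-depth assumption, and the hypothesis that $Y$ has semistable reduction is an abstract existence statement about \emph{some} model, giving no direct control over the specific scheme $\Y$ you have built. What is needed is an explicit check that the odd part of $\operatorname{div}(f_1)$ on $\mathcal{C}_h$ is a disjoint union of regular components, and this is exactly the computation you defer as ``book-keeping''. The paper carries out the analogous check on $\mathcal{C}_1$ by reducing to the divisor of $f_2$ on $\X$ (where odd-multiplicity components are disjoint because $\X$ separates the branch points) and then lifting via \cite[Theorem~5.2]{DDMM18}. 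Your approach could in principle be completed in the same spirit --- and indeed the horizontal part of $\operatorname{div}(f_1)$ on $\mathcal{C}_h$ is automatically even, a pleasant feature of your factorisation --- but the vertical verification cannot be waved away.
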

\begin{proof}
	The normalisation of $\X$ in $K(Y)$ is isomorphic to the normalisation of $\mathcal{C}_1$ in $K(Y)$, so it is sufficient to prove that the latter is a proper regular model of $Y$. Let $\varphi : C_1 \rightarrow \PP^1$ be the canonical double cover and write $\mathcal{D} = (\varphi_1^{-1}(f_2)) = \sum m_i \Gamma_i$, the divisor of (the pullback of) $f_2$ on $\mathcal{C}_1$. By \cite[Lemma~2.1]{srinivasan2015conductors}, it is sufficient to prove that
	\begin{enumerate}
		\item $\varphi_1^{-1}(f_2)$ is square-free on $\mathcal{C}_1$,
		\item any two $\Gamma_i$ for which $m_i$ is odd do not intersect and,
		\item any $\Gamma_i$ for which $m_i$ is odd is regular.
	\end{enumerate}
	Since $f_1$ and $f_2$ share no common roots, $\varphi_1^{-1}(f_2)$ is square-free on $\mathcal{C}_1$ and the horizontal components of $\mathcal{D}$ do not intersect. We are left to consider the vertical components of $\mathcal{D}$. Note that any vertical component of odd multiplicity must arise as the preimage of some $E \in (f_2)_{\mathrm{vert}}$ which appears with odd multiplicity. The component $E$ has either one or two preimages in $\mathcal{C}_1$. In the first case, the preimage $\Gamma$ is regular by \cite[Theorem~5.2]{DDMM18}. Since $E$ does not intersect any other component of $(f_2)$ of odd multiplicity, $\Gamma$ cannot intersect a component of $\mathcal{D}$. In the second case, the two components are still regular and do not intersect each other, and cannot intersect any other component $\mathcal{D}$ as $E$ does not intersect any other component of $(f_2)$ of odd multiplicity.
\end{proof}

\begin{prop}
	\label{prop:comps}
	Let $Y$ be a semistable bihyperelliptic curve as in Theorem \ref{thm:main} and $\Y$ the model obtained by normalising. Let $\Y^{\min}$ be the minimal regular model of $\Y$. Then each principal cluster $\s$ contributes the following components to $\Y_k^{\mathrm{min}}$: if \"ubereven, $1$ component $\Gamma_{\s}$ if $\s$ has polychromatic children and $2$ components $\Gamma_{\s}^+, \Gamma_{\s}^-$ if $\s$ has monochromatic children; and if \"ubereven: $2$ components $\Gamma_{\s}^+, \Gamma_{\s}^-$ if $\s$ has chromatic children and $4$ components $\Gamma_{\s}^{+,+}, \Gamma_{\s}^{+,-}, \Gamma_{\s}^{-,+}, \Gamma_{\s}^{-,-}$ if $\s$ has no chromatic children.
\end{prop}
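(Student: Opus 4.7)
The plan is to analyse the components of $\Y_k$ above a fixed component $\Gamma = \Gamma_{\s, \PP^1}$ of $\X_k$ by combining the Galois theory of the $C_2 \times C_2$ cover $Y/\PP^1$ with the hyperelliptic normalisation results of \cite[Sections~5--6]{DDMM18}, applied separately to each of the three intermediate double covers $C_1, C_2, C_h \to \PP^1$. Since $Y$ is semistable and $p > 2$, the inertia subgroup of $G := \Gal(Y/\PP^1) \cong C_2 \times C_2$ at any component of $\X_k$ is trivial, and the number of components of $\Y_k$ lying above $\Gamma$ equals $|G|/|D_\Gamma|$, where $D_\Gamma \leq G$ is the decomposition group of any chosen lift. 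The intermediate double cover $\mathcal{C}_i \to \X$ splits at $\Gamma$ (i.e.\ has two components of $\mathcal{C}_{i,k}$ above $\Gamma$) if and only if $D_\Gamma \subseteq \Gal(Y/C_i)$. As the three subgroups $\Gal(Y/C_i) \leq G$ are the three distinct order-$2$ subgroups of $G$ and thus intersect pairwise trivially, the decomposition group $D_\Gamma$ must be either $G$, one of these three subgroups, or trivial, yielding respectively $0$, $1$ or $3$ intermediate covers that split; exactly two splitting is impossible.

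Next, I translate überevenness in each $\Sigma_i$ into a condition on the colouring of $\s$. Since $\Sigma_1, \Sigma_2, \Sigma_h$ all give rise to the same admissible collection of discs (and hence the same $\X$), \cite[Sections~5--6]{DDMM18} applies to each and shows that $\mathcal{C}_i \to \X$ splits at $\Gamma$ if and only if $\s$ is übereven in $\Sigma_i$. The children of $\s$ in $\Sigma_1$ are the non-singleton children of $\s$ in $\Sigma_{\chi}$ together with its red singleton children, and symmetrically for $\Sigma_2$, while the children in $\Sigma_h$ agree with those in $\Sigma_{\chi}$. Combining this with Lemma \ref{lem:colparity}, which says a cluster is even in $\Sigma_1$ iff blue or black, even in $\Sigma_2$ iff red or black, and even in $\Sigma_h$ iff purple or black, I obtain: $\s$ is übereven in $\Sigma_1$ iff every chromatic child of $\s$ is blue; übereven in $\Sigma_2$ iff every chromatic child is red; and übereven in $\Sigma_h$ iff $\s$ is übereven in $\Sigma_{\chi}$.

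With this dictionary in hand, the four cases of the proposition follow by direct enumeration. If $\s$ is not übereven with polychromatic children, then $\s$ has chromatic children of at least two colours, so is not übereven in $\Sigma_1$ or $\Sigma_2$; no cover splits, giving one component $\Gamma_\s$. If $\s$ is not übereven with monochromatic children, say all red, then $\s$ is übereven in $\Sigma_2$ only; exactly $C_2 \to \PP^1$ splits, giving two components $\Gamma_\s^\pm$. If $\s$ is übereven with chromatic children, the absence of singleton children together with evenness of all proper children forces each chromatic child to be purple, so $\s$ is not übereven in $\Sigma_1$ or $\Sigma_2$; only $C_h \to \PP^1$ splits, giving two components $\Gamma_\s^\pm$. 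If $\s$ is übereven with no chromatic children, then all its children are black and $\s$ is vacuously übereven in $\Sigma_1$ and $\Sigma_2$ too; all three covers split, giving four components $\Gamma_\s^{\pm,\pm}$.

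The main obstacle I anticipate is verifying that each of these components of $\Y_k$ remains present (and principal) in the minimal regular model $\Y^{\min}$, rather than being contracted. This should follow from principality of $\s$, which ensures $\Gamma_{\s, \PP^1}$ is non-contractible in $\X_k$, together with the semistability of $Y$, which constrains the contractible components of $\Y_k$ to those sitting inside the linking chains between principal components. A secondary technical point is that $\Sigma_1$ and $\Sigma_2$ are not conventional cluster pictures (they may have empty or unary clusters), but as already observed in the paper this does not affect the applicability of \cite{DDMM18}.
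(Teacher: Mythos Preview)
Your proposal is correct and reaches the same conclusion as the paper, but it is organised more systematically. The paper argues case by case through the tower: first it uses $\mathcal{C}_h$ to bound the number of lifts (one or two if $\s$ is not \"ubereven, two or four if it is), then invokes either branch points on $\Gamma_{\s,1}$ or splitting in $\mathcal{C}_1$/$\mathcal{C}_2$ to pin down the exact count, with a separate $C_2\times C_2$-action argument for the four-component case. You instead package everything into the decomposition group $D_\Gamma\leq G$ and the clean structural observation that, since the three order-$2$ subgroups of $G$ are pairwise intersecting trivially, exactly $0$, $1$, or $3$ of the intermediate covers split. This unifies the case analysis and, as a bonus, your handling of the \"ubereven-with-chromatic-children case is sharper than the paper's: you correctly deduce from Lemma~\ref{lem:colparity} that any such chromatic child must be purple, whereas the paper's parenthetical ``(e.g.\ red)'' is misleading in that case.

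Two minor points. The inertia-triviality remark is unnecessary for the component count (the formula $|G|/|D_\Gamma|$ holds regardless), and in fact $\Y_k$ can contain multiplicity-$2$ components in linking chains (cf.\ Remark~\ref{rmk:twincomps}); you could simply drop it. For survival in $\Y^{\min}$, your sketch is the right idea but the paper's argument is slightly more direct than yours: since $\s$ is principal in some $\Sigma_i$, \cite[Theorem~8.5]{DDMM18} gives that $\Gamma_{\s,i}$ has positive genus or meets at least three other components, and this property then lifts along $\Y_k\to\mathcal{C}_{i,k}$. You should state that lifting step explicitly rather than appealing to a general constraint on contractible components of semistable models.
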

\begin{proof}
	Consider a principal cluster $\s$ and its corresponding component $\Gamma_{\s, \PP^1}$ in the model of $\PP^1$. If $\s$ is not \"ubereven, then $\Gamma_{\s, \PP^1}$ lifts to one component $\Gamma_{\s,h}$ in $\mathcal{C}_h$ and so lifts to either one or two components in $\Y_k$. Suppose $\s$ has polychromatic children. Then $\s$ has one corresponding component $\Gamma_{\s,1} \in \mathcal{C}_1$, and $\Gamma_{\s,1}$ contains branch points of the morphism $\Y \rightarrow \mathcal{C}_1$ (corresponding to the blue children of $\s$), and hence must lift to a single component in $\Y_k$.  If $\s$ has monochromatic (e.g. red) children then it has two components in either $\mathcal{C}_1$ or $\mathcal{C}_2$ (in this case $\mathcal{C}_2$), and hence must have two associated components in $\Y$. 
	
	If $\s$ is \"ubereven, then it has two components in $\mathcal{C}_h$ so lifts to either two or four in $\Y_k$. If $\s$ has chromatic (e.g. red) children then it has a single component in either $\mathcal{C}_1$ or $\mathcal{C}_2$ (in this case $\mathcal{C}_1$), so can only lift to two components in $\Y_k$. If $\s$ has no chromatic children, then it has two corresponding components in each of $\mathcal{C}_1, \mathcal{C}_2$ and $\mathcal{C}_h$. Since $C_2 \times C_2$ acts on the components of $\Y_k$ arising from $\s$, and their images are the $\Gamma_{\s,i}^{\pm}$ under the quotient of the three non trivial subgroups of $C_2 \times C_2$, there must be four components corresponding to $\s$ in $\Y_k$.
	
	Since $\s$ is principal, it is principal in one of $\Sigma_1, \Sigma_2$ or $\Sigma_h$, say $\Sigma_1$. Then by \cite[Theorem~8.5]{DDMM18}, any component $\Gamma_{\s,1} \in \mathcal{C}_1$ arising from $\s$ either has positive genus, or intersects at least three other components. Therefore the same can be said for any $\Gamma \in \Y_k$ arising from $\s$. We cannot blow such components down, and hence the same components appear in $\Y^{\min}_k$.
\end{proof}

\begin{prop}
	\label{prop:links}
	Let $Y,\Y^{\min}$ be as in Proposition \ref{prop:comps}, and let $\s' < \s$ be principal clusters of $Y$. Then the components of $\s$ and $\s'$ in $\Y_k$ are linked by two chains if $\s'$ is chromatic and four otherwise, as described in the statement of Theorem \ref{thm:main}. Furthermore, if $\tfrak < \s$ is a twin or $\s < \tfrak$ a cotwin then there is one loop if the child is chromatic and two loops if the child is black, and if $\Rcal = \s \sqcup \s'$ is not principal then the components of $\s$ and $\s'$ are linked as in the statement of Theorem \ref{thm:main}.
\end{prop}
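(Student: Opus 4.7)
The plan is to reduce to known hyperelliptic data via the three intermediate double covers $Y \to C_i$ for $i \in \{1,2,h\}$: the models $\mathcal{C}_i$ are understood through \cite[Theorem~8.5]{DDMM18} applied to the cluster pictures $\Sigma_i$, so $\Y_k$ will be described by lifting one of these along the remaining quadratic extension of function fields (say $y_1 = \sqrt{f_1}$ lifts $\mathcal{C}_h$ to $\Y$). Equivalently, we analyse the chain of $\PP^1$'s in $\X_k$ joining $\Gamma_{\s,\PP^1}$ and $\Gamma_{\s',\PP^1}$ directly and track how each intermediate component lifts to $\Y_k$; this is the formulation I shall adopt.

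For two principal clusters $\s' < \s$, the chain in $\X_k$ is indexed by $p$-adic disks $D$ with $d_{\s} < d_D < d_{\s'}$. For each such $D$, I would compute the numerical invariants $\omega_D(f_1), \omega_D(f_2), \omega_D(f_1 f_2) \in \{0,1\}$, which determine whether the rational component $E_D$ lifts to $1$, $2$, or $4$ components in $\Y_k$. A short calculation shows that only the roots lying inside $\s'$ contribute $d_D$-dependent terms, so each parity $\omega_D(f_i)$ is a function of $d_D \bmod 2$ and of the parity of $|\s' \cap \{\textrm{roots of }f_i\}|$. Applying Lemma \ref{lem:colparity} translates these parities into the colour of $\s'$: if $\s'$ is black then all three $\omega$'s vanish and each $E_D$ splits cleanly into four copies, producing $4$ chains of length $\delta_{\s'}$; if $\s'$ is chromatic then at least two of the $\omega$'s alternate along the chain, so adjacent lifts glue under the cover, yielding $2$ chains of length $\frac{1}{2}\delta_{\s'}$. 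The half-integer lengths will be certified by passing to $K(\sqrt{\pi})$, doubling all lengths via Lemma \ref{lem:ramextmodel}, and checking the resulting integer-length chain structure there.

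The twin and cotwin cases will follow the same pattern, except that only one end of the chain is principal, so the chains produced by the analysis above get identified in pairs: a chromatic twin yields one loop of length $\delta_{\tfrak}$, and a black twin yields two loops of length $2\delta_{\tfrak}$, the factor of $2$ arising because each loop traverses the non-principal end twice. For $\Rcal$ non-principal, that is $\Rcal = \s \sqcup \s'$ with $\s, \s'$ principal of compatible type, the components above $\Gamma_{\Rcal, \PP^1}$ have multiplicity greater than $1$ and must be contracted (see Remark \ref{rmk:twincomps}); the chains descending from $\s$ and from $\s'$ then concatenate across the contracted locus to produce links of combined length $\delta_\s + \delta_{\s'}$, doubled in the all-black cotwin rows.

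The main obstacle will be the bookkeeping: identifying which concrete component of $\Gamma_\s$ is joined to which component of $\Gamma_{\s'}$, i.e.\ determining the $\pm$ and $\pm,\pm$ decorations together with the twist $\sigma(\s, \s')$. This will require tracking the independent sign choices in $\sqrt{f_1}$ and $\sqrt{f_2}$, coordinating the $C_2 \times C_2$-action on $\Y_k$, and verifying compatibility with the characters $\epsilon_{\s, i}$ of the three hyperelliptic models. Definition \ref{def:gensig} encodes the exact combinatorial rule, and confirming that it matches the identifications arising from three simultaneous cover calculations is the delicate part of the proof.
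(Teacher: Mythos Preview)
Your approach is sound but genuinely different from the paper's. You propose to work component-by-component along the chain in $\X_k$, computing the parity pair $(\omega_D(f_1),\omega_D(f_2))$ for each intermediate disk and reading off both the number of preimages and the resulting chain lengths in one pass. The paper instead argues at the level of entire linking chains: it invokes \cite[Theorem~8.5]{DDMM18} for each of the three intermediate models $\mathcal{C}_1,\mathcal{C}_2,\mathcal{C}_h$, counts how many chains sit above the single chain in $\X_k$ in each, and then uses the $C_2\times C_2$ tower to force the count in $\Y_k$ (two if $\s'$ is chromatic, four if black). The lengths are not handled here at all but deferred to a separate Lemma~\ref{lem:linkinglengths}, which uses harmonic morphisms of metrised dual graphs rather than any $\omega_D$ computation. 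For the $\pm$ bookkeeping the paper simply draws the tower of dual-graph fragments case by case (monochromatic same colour, monochromatic opposite colours, etc.), which is where the convention $v_\s^{\pm,+}=v_\s^{\pm,-}=v_\s^{\pm}$ and the sign $\sigma(\s,\s')$ visibly emerge; your proposed route via tracking sign choices in $\sqrt{f_1},\sqrt{f_2}$ would reach the same conclusion but with more algebra and less picture. One small correction: when $\Rcal=\s\sqcup\s'$ is not principal, the components above $\Gamma_{\Rcal,\PP^1}$ are not of multiplicity greater than one in general; they are reduced rational curves meeting the rest of the fibre in exactly two points, hence contracted in passing to $\Y^{\min}$. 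Your invocation of Remark~\ref{rmk:twincomps} is still the right move, and the concatenation argument you describe goes through unchanged once this is corrected. Finally, note that your claim ``all three $\omega$'s vanish'' for black $\s'$ tacitly uses that $\nu_\s(f_i)$ is even for every proper cluster and each $i\in\{1,2,h\}$, which is part of the semistability hypothesis on $C_1,C_2,C_h$; you should make that dependence explicit.
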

\begin{proof}
	Assume that we are in the case where $\s' < \s$ are both principal, since the other cases are checked similarly using Remark \ref{rmk:twincomps}. It is clear that components have linking chains to the components corresponding to their parents, since this is the case for the model of $\PP^1$. Therefore, we have to calculate how many linking chains there are ($2$ or $4$) and precisely which components are linked to which others. The lengths of the linking chains is proved separately in Lemma \ref{lem:linkinglengths}.
	
	Suppose that $\s' < \s$ with $\s'$ chromatic. Then there are several cases for the different children $\s'$ can have. If $\s'$ is red, then $\Gamma_{\s,1}$ and $\Gamma_{\s',1}$ are linked by one chain and $\Gamma_{\s,2}^{\pm}$ and $\Gamma_{\s',2}^{\pm}$ are linked by two chains. Similarly if $\s'$ is blue (swapping $1$ and $2$). In either case, $\Gamma_{\s}^{\pm}$ and $\Gamma_{\s'}^{\pm}$ are linked by two chains. Similarly if $\s'$ is purple then $\Gamma_{\s,h}$ and $\Gamma_{\s',h}$ are linked by two chains but $\Gamma_{\s,1}$ and $\Gamma_{\s',1}$ are linked by one chain so we get two linking chains upstairs.
	
	Now suppose that $\s'$ is black. Then $\Gamma_{\s',1}^{\pm}$ has two linking chains up to the components of its parent, as does $\Gamma_{\s',2}$, so by a similar argument to Proposition \ref{prop:comps} the components of $\s'$ have four linking chains up to the components of $\s$. Therefore the number of linking chains is correct. We must check that the correct components are linked.
	
	This is done on a case by case basis. First assume $\s, \s'$ are not \"ubereven. If $\s$ has polychromatic children then it only has one component and everything is correct up to relabelling. Similarly if $\s'$ has polychromatic children. So assume both $\s$ and $\s'$ have monochromatic children. If they have monochromatic children of the same colour (say red), their components are linked in the following way in the tower of models:
	
	\begin{figure}[h]
		\centering
		\begin{tikzcd}
			& {\begin{tikzpicture}
				
				\begin{scope}[every node/.style={circle,thick,draw}]
				\node[label=left:{\Gamma_{\s}^+}] (A) at (0,0.3) {};
				\node[label=right:{\Gamma_{\s'}^+}] (B) at (1,0.3) {}; 
				\node[label=left:{\Gamma_{\s}^-}] (C) at (0, -0.3) {};
				\node[label=right:{\Gamma_{\s'}^-}] (D) at (1, -0.3) {};
				\end{scope}
				
				\path (A) edge[very thick, out=30, in=150] (B);
				\path (A) edge[very thick, out=-30, in=-150] (B);
				\path (C) edge[very thick, out=30, in=150] (D);
				\path (C) edge[very thick, out=-30, in=-150] (D);
				
				\end{tikzpicture}} \arrow[ld, no head] \arrow[d, no head] \arrow[rd, no head] & \\ {\begin{tikzpicture}
				\begin{scope}[every node/.style={circle,thick,draw}]
				\node[label=left:{\Gamma_{\s,1}}] (A) at (0,0) {};
				\node[label=right:{\Gamma_{\s',1}}] (B) at (1,0) {};
				\end{scope}
				
				\path (A) edge[very thick, out=30, in=150] (B);
				\path (A) edge[very thick, out=-30, in=-150] (B);
				
				\end{tikzpicture}} \arrow[rd, no head] & {\begin{tikzpicture}
				\begin{scope}[every node/.style={circle,thick,draw}]
				\node[label=left:{\Gamma_{\s,h}}] (A) at (0,0) {};
				\node[label=right:{\Gamma_{\s',h}}] (B) at (1,0) {};
				\end{scope}
				
				\path (A) edge[very thick, out=30, in=150] (B);
				\path (A) edge[very thick, out=-30, in=-150] (B);  
				
				\end{tikzpicture}} \arrow[d, no head] & {\begin{tikzpicture}
				\begin{scope}[every node/.style={circle,thick,draw}]
				\node[label=left:{\Gamma_{\s,2}^+}] (A) at (0,0.3) {};
				\node[label=right:{\Gamma_{\s',2}^+}] (B) at (1,0.3) {}; 
				\node[label=left:{\Gamma_{\s,2}^-}] (C) at (0, -0.3) {};
				\node[label=right:{\Gamma_{\s',2}^-}] (D) at (1, -0.3) {};
				\end{scope}
				
				\path (A) edge[very thick, out=0, in=180] (B);
				\path (C) edge[very thick, out=0, in=180] (D);
				
				\end{tikzpicture}} \arrow[ld, no head]\\
			& {\begin{tikzpicture}
				\begin{scope}[every node/.style={circle,thick,draw}]
				\node[label=left:{\Gamma_{\s, \PP^1}}] (A) at (0,0) {};
				\node[label=right:{\Gamma_{\s, \PP^1}}] (B) at (1, 0) {};
				\end{scope}
				
				\path (A) edge[very thick, out=0, in=180] (B);
				
				\end{tikzpicture}}
		\end{tikzcd}
	\end{figure}

	If $\s$ and $\s'$ have monochromatic children of different colours (e.g. $\s$ red and $\s'$ blue), then their components are linked in the following way in the tower of models:

	\begin{figure}[h]
		\centering
		\begin{tikzcd}
			& {\begin{tikzpicture}
				
				\begin{scope}[every node/.style={circle,thick,draw}]
				\node[label=left:{\Gamma_{\s}^+}] (A) at (0,0.3) {};
				\node[label=right:{\Gamma_{\s'}^+}] (B) at (1,0.3) {}; 
				\node[label=left:{\Gamma_{\s}^-}] (C) at (0, -0.3) {};
				\node[label=right:{\Gamma_{\s'}^-}] (D) at (1, -0.3) {};
				\end{scope}
				
				\path (A) edge[very thick, out=30, in=150] (B);
				\path (A) edge[very thick, out=-30, in=-210] (D);
				\path (C) edge[very thick, out=30, in=210] (B);
				\path (C) edge[very thick, out=-30, in=-150] (D);
				
				\end{tikzpicture}} \arrow[ld, no head] \arrow[d, no head] \arrow[rd, no head] & \\ {\begin{tikzpicture}
				\begin{scope}[every node/.style={circle,thick,draw}]
				\node[label=left:{\Gamma_{\s,1}}] (A) at (0,0) {};
				\node[label=right:{\Gamma_{\s',1}^+}] (B) at (1,0.3) {};
				\node[label=right:{\Gamma_{\s',1}^-}] (C) at (1, -0.3) {};
				\end{scope}
				
				\path (A) edge[very thick, out=30, in=180] (B);
				\path (A) edge[very thick, out=-30, in=-180] (C);
				
				\end{tikzpicture}} \arrow[rd, no head] & {\begin{tikzpicture}
				\begin{scope}[every node/.style={circle,thick,draw}]
				\node[label=left:{\Gamma_{\s,h}}] (A) at (0,0) {};
				\node[label=right:{\Gamma_{\s',h}}] (B) at (1,0) {};
				\end{scope}
				
				\path (A) edge[very thick, out=30, in=150] (B);
				\path (A) edge[very thick, out=-30, in=-150] (B);  
				
				\end{tikzpicture}} \arrow[d, no head] & {\begin{tikzpicture}
				\begin{scope}[every node/.style={circle,thick,draw}]
				\node[label=left:{\Gamma_{\s,2}^+}] (A) at (0,0.3) {};
				\node[label=right:{\Gamma_{\s',2}}] (B) at (1,0) {}; 
				\node[label=left:{\Gamma_{\s,2}^-}] (C) at (0, -0.3) {};

				\end{scope}
				
				\path (A) edge[very thick, out=0, in=150] (B);
				\path (C) edge[very thick, out=0, in=210] (B);
				
				\end{tikzpicture}} \arrow[ld, no head]\\
			& {\begin{tikzpicture}
				\begin{scope}[every node/.style={circle,thick,draw}]
				\node[label=left:{\Gamma_{\s, \PP^1}}] (A) at (0,0) {};
				\node[label=right:{\Gamma_{\s, \PP^1}}] (B) at (1, 0) {};
				\end{scope}
				
				\path (A) edge[very thick, out=0, in=180] (B);
				
				\end{tikzpicture}}
		\end{tikzcd}
	\end{figure}
	
	From here the following notation arises: that $\Gamma_{\s}^{\pm,+} = \Gamma_{\s}^{\pm,-} = \Gamma_{\s}^{\pm}$ if $\s$ has monochromatic red children and $\Gamma_{\s}^{+,\pm} = \Gamma_{\s}^{-,\pm} = \Gamma_{\s}^{\pm}$ if $\s$ has monochromatic blue children. If $\s$ or $\s'$ is \"ubereven, the different cases can be checked similarly.
	
\end{proof}

\begin{lemma}
	\label{lem:linkinglengths}
	Let $Y,\Y^{\min}$ be as in Proposition \ref{prop:comps}, and let $\s' < \s$ be two principal clusters. Then any linking chain $L_{\s'}$ arising from this pair has length $\frac12 \delta_{\s'}$ if $\s'$ is chromatic and $\delta_{\s'}$ otherwise. If $\tfrak < \s$ is a twin or $\s < \tfrak$ a cotwin then any loop arising from $\tfrak$ has length $\delta_{\tfrak}$ if the child is chromatic, and $2\delta_{\tfrak}$ otherwise. If $\Rcal = \s \sqcup \s'$ is not principal, then any length of a linking chain arising from $\s$ and $\s'$ has length as described in Theorem \ref{thm:main}.
\end{lemma}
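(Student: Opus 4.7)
The plan is to perform a local analysis at each intersection of the linking chain, tracking the structure of the model through the tower of double covers $\X \to \mathcal{C}_i \to \Y$. As a starting point, the linking chain in $\X$ between $\Gamma_{\s,\PP^1}$ and $\Gamma_{\s',\PP^1}$ has length $\delta_{\s'}$ by the explicit construction of $\X$ from the admissible collection of disks of $\Sigma_{\chi}$ \cite[Section~3]{DDMM18}, with all double points of the chain having thickness $1$. For the intermediate hyperelliptic models, \cite[Theorem~8.5]{DDMM18} applied to each $\mathcal{C}_i^{\min}$ gives chains of length $\delta_{\s'}$ between components corresponding to principal clusters of $\Sigma_i$; the model $\mathcal{C}_i$ itself may carry additional non-principal components, but these can be contracted to recover $\mathcal{C}_i^{\min}$.

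The heart of the argument is the local computation at a double point $P$ along the chain in $\mathcal{C}_i$. Choose coordinates so that $\mathcal{C}_i$ is $\operatorname{Spec} \OO_K[x, y]/(xy - \pi)$ near $P$. The double cover $\Y \to \mathcal{C}_i$ is given by $z^2 = f_j$ for the appropriate $j \neq i$, and $f_j$ has local form $u x^a y^b$ near $P$, with $a, b$ read off from $\nu_D(f_j)$ for the two disks attached to the branches through $P$. Three cases arise according to the parities of $a$ and $b$: both even (the cover is étale at $P$ and the chain splits in two of the same length); exactly one odd (the cover is ramified along one branch and the chain is preserved as a single chain of the same length); both odd (cross-ramification, producing an $A_1$-singularity whose resolution inserts an additional $\PP^1$ that is non-reduced of multiplicity $2$ and must therefore be contracted on passing to $\Y^{\min}$).

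The main obstacle is establishing the factor of $\frac12$ for chromatic $\s'$. Here I expect a parity bookkeeping argument along the chain to show that for chromatic $\s'$ the cross-ramification case occurs at every intersection, so that $\delta_{\s'}$ non-reduced components arise in $\Y$ and contract to produce a chain of exactly $\frac12\delta_{\s'}$ edges in $\Y^{\min}$. For black $\s'$, Lemma \ref{lem:colparity} ensures that for some choice of $i$ the cover $\Y \to \mathcal{C}_i$ is étale along the entire chain, so no contractions occur and the chain retains length $\delta_{\s'}$. The parity data needed throughout is precisely the colour of $\s'$, together with its parities in $\Sigma_1$, $\Sigma_2$ and $\Sigma_h$, which are exactly what Lemma \ref{lem:colparity} controls.

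For the twin, cotwin, and disjoint-union cases $\Rcal = \s \sqcup \s'$, I would invoke Lemma \ref{lem:ramextmodel} and Remark \ref{rmk:twincomps}: over a totally ramified quadratic extension $L/K$, the non-principal cluster becomes effectively principal in the sense that its would-be component appears in the minimal regular model over $L$, so the principal-case analysis above applies. Since Lemma \ref{lem:ramextmodel} multiplies all chain lengths by $2$ on passage from $K$ to $L$, dividing by $2$ recovers the stated lengths over $K$, namely $\delta_{\tfrak}$ or $2\delta_{\tfrak}$ for chromatic or black twin/cotwin loops and $\frac12(\delta_\s + \delta_{\s'})$ or $\delta_\s + \delta_{\s'}$ in the disjoint-union case, as prescribed in Theorem \ref{thm:main}.
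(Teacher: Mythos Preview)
Your overall strategy---local analysis at the nodes of the chain, factored through an intermediate $\mathcal{C}_i$---is a legitimate alternative to what the paper does, but the parity bookkeeping you sketch is wrong, and this is the heart of the argument.

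Take $\s'$ chromatic, say red. Then $\s'$ is odd in $\Sigma_1$ and even in $\Sigma_2$. Along the chain of disks $D$ between $\s'$ and $\s$, the quantity $\nu_D(f_1)$ changes by $|\s'\cap\Rcal_1|$ (odd) at each step, so $\omega_D(f_1)$ \emph{alternates}; meanwhile $\omega_D(f_2)$ is constant, and under the semistability hypothesis it is constant equal to $0$ at the principal endpoints, hence everywhere. Passing through $\mathcal{C}_2$ and covering by $z^2=f_1$, at every node you are therefore in your ``exactly one odd'' case, \emph{not} the ``both odd'' cross-ramification case. No $A_1$-singularities are resolved and no extra exceptional $\mathbb{P}^1$'s are inserted. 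What actually happens in the ``exactly one odd'' case is something you do not mention: writing the node as $xy=\pi$ with $f_1\equiv ux$ mod squares, the cover is $z^2 y = u\pi$, so the component on the $\omega=1$ side acquires \emph{multiplicity $2$} in $\Y$. Since $\omega_D(f_1)$ alternates, exactly half of the intermediate components in the chain have multiplicity $2$; contracting these in passing to $\Y^{\min}$ is what halves the length to $\tfrac12\delta_{\s'}$. Your description of the ``exactly one odd'' case as ``chain preserved of the same length'' is combinatorially right but omits precisely the multiplicity-$2$ phenomenon that does the work. Similarly, your claim that the local picture in $\mathcal{C}_i$ is always $xy=\pi$ fails whenever $\s'$ is odd in $\Sigma_i$, since then $\mathcal{C}_i$ itself already has multiplicity-$2$ components along the chain.

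By contrast, the paper avoids all of this local bookkeeping. After a totally ramified base change so that $\Y\to\X$ extends to the models, the induced map of dual graphs is a \emph{harmonic morphism} of metric graphs (in the sense of Amini--Baker--Brugall\'e--Rabinoff), and harmonicity forces the edge lengths upstairs to be the downstairs length divided by the local degree. Since the cover has degree $4$, a chain with two preimages (chromatic $\s'$) has local degree $2$ on each, so the length halves; a chain with four preimages (black $\s'$) has local degree $1$, so the length is preserved. One then descends via Lemma~\ref{lem:ramextmodel}. Your treatment of the twin, cotwin and $\Rcal=\s\sqcup\s'$ cases via Remark~\ref{rmk:twincomps} and Lemma~\ref{lem:ramextmodel} is essentially what the paper does there.
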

\begin{proof}
	Possibly after a finite extension $L/\Kur$, the map $Y \rightarrow \PP^1$ extends to a map of models $\Y \rightarrow \X$ by \cite[Theorem~2.3]{liu_lorenzini_1999}. Furthermore, this map induces a harmonic morphism of augmented $\Z$-graphs, in the sense of \cite[Section~2]{amini2015lifting} (see also Sections 5,8) on the dual graphs of $\Y$ and $\X$. The length of an edge between two vertices in an augmented $\Z$-graph is the thickness of the intersection point of the components the vertices represent. Therefore the distance between two vertices of degree $\geq3$ is exactly the length of the linking chain between them. 
	
	If $\s' < \s$ are principal, the lemma follows, noting that a linking chain from a chromatic cluster to its parent has two preimages in $\Y'$ (so the length halves), but a linking chain from a black cluster to its parent has four preimages so the length stays the same.
	
	If $\tfrak < \s$ is a chromatic twin then (possibly after a field extension), we can think of the loop $L_{\tfrak}$ as consisting of a component $\Gamma_{\tfrak}$, the unique lift of $\Gamma_{\tfrak, \PP^1}$ with two linking chains to $\Gamma_{\s}$. Since $\tfrak$ is chromatic, by the argument above the linking chains will both have length $\frac12 \delta_{\tfrak}$ and hence the total loop will have length $\delta_{\tfrak}$. A similar argument is made if $\tfrak$ is a black twin, or if $\Rcal = \s \sqcup \s'$ is not principal.
%
%
\end{proof}

\begin{prop}
	Let $Y,\Y^{\min}$ be as in Proposition \ref{prop:comps} and let $\s$ be a principal cluster of $Y$. Then the components associated to $\s$ have genus $g_{\chi}(\s)$.
\end{prop}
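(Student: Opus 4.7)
The plan is to compute $g(\Gamma_{\s}^{(\cdot)})$ by realising each such component as a cover of a known curve --- either a component of one of the intermediate hyperelliptic models $\mathcal{C}_i$ (for $i \in \{1,2,h\}$) or the component $\Gamma_{\s,\PP^1} \cong \PP^1_k$ of $\X$ --- and applying Riemann--Hurwitz together with the genus formula from \cite[Theorem~8.5]{DDMM18}, which gives the genera of the hyperelliptic components in terms of $\Sigma_1, \Sigma_2, \Sigma_h$. The case distinction mirrors Proposition \ref{prop:comps}.

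\textbf{Case A ($\s$ has monochromatic or no chromatic children).} Here I first match component counts: if $\s$ has monochromatic red (resp.\ blue) children then the number of components of $\Y_k$ above $\Gamma_{\s,\PP^1}$ equals the corresponding number for $\mathcal{C}_{1,k}$ (resp.\ $\mathcal{C}_{2,k}$), and if $\s$ has no chromatic children the same equality holds between $\Y_k$ and $\mathcal{C}_{h,k}$. Since in each case the relevant double cover $\Y \to \mathcal{C}_i$ has degree $2$ and the upstairs and downstairs component counts are equal, the induced map between any pair of matched components has degree $1$ and is therefore an isomorphism. Consequently $g(\Gamma_{\s}^{(\cdot)}) = g(\Gamma_{\s,i}^{(\cdot)}) = g(\s)$ computed in $\Sigma_i$ by \cite[Theorem~8.5]{DDMM18}, which matches $g_\chi(\s)$ by Definition \ref{def:genusintro}.

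\textbf{Case B ($\s$ has polychromatic children).} Here I exploit the full $(\Z/2)^2$-Galois structure of the cover $\bigsqcup \Gamma_{\s}^{(\cdot)} \to \Gamma_{\s,\PP^1}$. Decomposing $\pi_*\OO$ into character eigenspaces for the action of $C_2 \times C_2$ yields the Euler-characteristic identity
\[
\sum_{(\cdot)} \chi\bigl(\Gamma_{\s}^{(\cdot)}\bigr) \;=\; \sum_{i \in \{1,2,h\}} \sum_{(\cdot)} \chi\bigl(\Gamma_{\s,i}^{(\cdot)}\bigr) \;-\; 2\,\chi\bigl(\Gamma_{\s,\PP^1}\bigr),
\]
with $\chi(\Gamma_{\s,\PP^1}) = 2$. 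The right-hand side is computable from \cite[Theorem~8.5]{DDMM18}. Combined with the count of upstairs components from Proposition \ref{prop:comps} and the fact that the $C_2\times C_2$-action permutes these transitively within each orbit, forcing all components arising from $\s$ to share the same genus, this determines $g(\Gamma_{\s}^{(\cdot)})$. A direct combinatorial check --- splitting on the parities of the numbers of red, blue, and purple children of $\s$ --- confirms the result agrees with $g_\chi(\s)$.

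\textbf{Main obstacle.} The principal technical difficulty lies in the edge cases where $\s = \Rcal$ (so that $\Gamma_{\s,\PP^1}$ has no parent-direction node) together with specific parities of $\deg f_1$ and $\deg f_2$. These govern whether the point at infinity on $\Gamma_{\s,\PP^1}$ is a Weierstrass point for each of the three intermediate hyperelliptic covers, which alters the DDMM genus of each $\Gamma_{\s,i}^{(\cdot)}$ and hence shifts the Euler-characteristic sum; tracking these parity corrections case-by-case on the colour of $\Rcal$ in each $\Sigma_i$ is what produces the distinction between $|\s_\chi| - 2$ and $|\s_\chi| - 3$ in the definition of $g_\chi(\Rcal)$ and completes the proof.
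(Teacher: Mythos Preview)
Your Case A contains a genuine error. You claim that for $\s$ with monochromatic red children, the number of components in $\Y_k$ over $\Gamma_{\s,\PP^1}$ equals that in $\mathcal{C}_{1,k}$; but by Proposition~\ref{prop:comps} there are two components in $\Y_k$, whereas $\s$ is \emph{not} \"ubereven in $\Sigma_1$ (its red children are odd there), so there is only one component $\Gamma_{\s,1}$. The same mismatch (two versus one, or four versus two) occurs in each of your three sub-cases. Worse, your deduction ``equal counts plus degree $2$ implies degree $1$ on each matched component'' is backwards: if a degree-$2$ cover has the \emph{same} number of components upstairs and downstairs, then each component maps with degree $2$, not $1$ (over each downstairs component the total degree is $2$, and with a single upstairs preimage that forces degree $2$). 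The argument you want is precisely the opposite: choose the intermediate curve $\mathcal{C}_i$ for which the ratio of component counts is $2{:}1$ --- the paper takes $\mathcal{C}_h$ for the non-\"ubereven monochromatic case and $\mathcal{C}_1$ or $\mathcal{C}_2$ for the \"ubereven chromatic case --- so that the two (or four) upstairs components each map with degree $1$ to the single (or two) components below, yielding the isomorphism $\Gamma_{\s}^{\pm} \cong \Gamma_{\s,i}$. As it happens your choice of $\mathcal{C}_1$ for monochromatic red would also work for this reason, but not for the reason you give.

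Your Case B takes a genuinely different route from the paper. The paper applies Riemann--Hurwitz directly to the degree-$4$ map $\Gamma_{\s} \to \Gamma_{\s,\PP^1}$, identifying the branch points as the reductions of the chromatic children together with the point at infinity (with the two listed exceptions), and reads off the genus. Your Kani--Rosen-style Euler-characteristic identity is valid and will give the same answer after the combinatorial bookkeeping you describe, but it is considerably less transparent: one has to track, for each of $\Sigma_1,\Sigma_2,\Sigma_h$, both the number of components and their individual genera over $\s$, and then sum. The paper's direct Riemann--Hurwitz computation avoids this by working with a single cover and counting branch points once.
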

\begin{proof}
	Let $\s$ be a principal cluster. First suppose $\s$ is not \"ubereven and has polychromatic children. In this case there is a unique component $\Gamma_{\s}$ arising from $\s$. This is then a direct application of Riemann-Hurwitz. The children of $\s$ correspond to points on the component $\Gamma_{\s, \PP^1}$ as in \cite[Definition~3.7]{DDMM18}, and by Proposition \ref{prop:links} the points arising from chromatic children are precisely the non-infinity branch points of $\Gamma_{\s} \rightarrow \Gamma_{\s,\PP^1}$. In addition, there is an extra branch point at infinity, unless $\s$ is the top cluster and $f_1$ and $f_2$ both have even degree, or the top cluster $\Rcal = \s \bigsqcup \s'$ is not principal, $\s$ is even and $f_1$ and $f_2$ both have even degree. 
	
	If $\s$ is not \"ubereven and has monochromatic children, then the components $\Gamma_{\s}^+$ and $\Gamma_{\s}^-$ are each isomorphic to $\Gamma_{\s,h}$ and so have genus $g(\s)$. The same is true if $\s$ is \"ubereven and has chromatic children, except with $\Gamma_{\s,1}$ if $\s$ has red children and $\Gamma_{\s,2}$ if blue. If $\s$ is \"ubereven with no chromatic children then its $4$ components must have genus $0=g(\s)$ as well.
\end{proof}

\begin{thm}
	Denote the Frobenius automorphism by $\Frobtwo$. It acts on the dual graph of $\Y^{\min}_k$ in the following way:
	
	\begin{enumerate}
		\item $\Frobtwo(\Gamma_{\s}^{\pm}) = \Gamma_{\Frobtwo(\s)}^{\pm\epsilon_{\s,i}(\Frobtwo)}$ \textrm{for $\s$ with chromatic children, $i \in \{1,2,h\}$ with $\s \in \Sigma_i$ \"ubereven},
		\item $\Frobtwo(\Gamma_{\s}^{\pm,\pm}) = \Gamma_{\Frobtwo(\s)}^{\pm\epsilon_{\s,2}(\Frobtwo),\pm\epsilon_{\s,1}(\Frobtwo)}$ \textrm{for $\s$ \"ubereven with no chromatic children},
		\item $\Frobtwo(L_{\s}^{\pm}) = \Gamma_{\Frobtwo(\s)}^{\pm\epsilon_{\s,i}(\Frobtwo)}$ \textrm{for $\s$ chromatic, $i \in \{1,2,h\}$ with with $\s \in \Sigma_i$ even},
		\item $\Frobtwo(L_{\s}^{\pm,\pm}) = L_{\Frobtwo(\s)}^{\pm\epsilon_{\s,2}(\Frobtwo),\pm\epsilon_{\s,1}(\Frobtwo)}$ \textrm{for $\s$ black},
		\item $\Frobtwo(L_{\tfrak}) = \epsilon_{\tfrak, h}(\Frobtwo) L_{\Frobtwo(\tfrak)}$ for $\tfrak$ a chromatic twin, where $-L$ denotes $L$ with the opposite orientation,
		\item $\Frobtwo(L_{\tfrak}^{\pm}) = \epsilon_{\tfrak, j}(\Frobtwo) L_{\Frobtwo(\tfrak)}^{\pm \epsilon_{\tfrak, i}(\Frobtwo)}$ for $\tfrak$ a black twin, $i,j \in \{1, 2\}$ such that $\tfrak$ is empty in $\Sigma_i$ and $i \neq j$.
	\end{enumerate}
\end{thm}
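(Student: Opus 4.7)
The plan is to propagate the Frobenius action from the three intermediate hyperelliptic models $\mathcal{C}_1, \mathcal{C}_2, \mathcal{C}_h$ up to $\Y^{\min}$. For each $\mathcal{C}_i$, the action of $\Frobtwo$ on the dual graph of $\mathcal{C}_{i,k}$ is the content of \cite[Theorem~8.5]{DDMM18}: for an even cluster $\s$ in $\Sigma_i$ the two components $\Gamma_{\s,i}^{\pm}$ are swapped exactly when $\epsilon_{\s,i}(\Frobtwo)=-1$, and analogously for loops and linking chains above even clusters or twins. Since the normalisations producing $\mathcal{C}_i$ and $\Y$ from $\X$ are functorial and Galois-equivariant, and the $C_2\times C_2$ covering automorphisms of $Y/\PP^1$ commute with $\Frobtwo$, the action of $\Frobtwo$ upstairs is determined by the three actions downstairs.

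First I would address the component cases (1) and (2). The labelling $\pm$ or $(\pm,\pm)$ established in Proposition \ref{prop:comps} was set up precisely so that each index records a sheet of the appropriate $\mathcal{C}_i$: in case (2) the first $\pm$ tracks the sheet in $\mathcal{C}_2$ and the second the sheet in $\mathcal{C}_1$, while in (1) the single $\pm$ tracks the sheet in the unique $\mathcal{C}_i$ in which $\s$ is übereven. The three sheet-swaps in $\mathcal{C}_1, \mathcal{C}_2, \mathcal{C}_h$ are governed by $\epsilon_{\s,1}, \epsilon_{\s,2}, \epsilon_{\s,h}$ respectively and are mutually compatible (reflecting $f_h=f_1 f_2$), so the formulas in (1) and (2) follow directly. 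The chain cases (3) and (4) for principal clusters are handled by exactly the same argument applied to edges: each chain of $\Y^{\min}_k$ lies above a chain in the appropriate $\mathcal{C}_{i,k}$ (two-to-one for chromatic $\s$, four-to-one for black), and $\Frobtwo$ permutes them according to the downstairs permutation.

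The main obstacle will be the twin-edge cases (5) and (6), because each such edge has an orientation that $\Frobtwo$ may fix or reverse, and for a black twin there are two edges that may additionally be exchanged. The plan is to invoke Lemma \ref{lem:ramextmodel} to pass to a ramified quadratic extension $L/K$ in which each twin $\tfrak$ contributes a genuine rational component $\Gamma_{\tfrak}$ (see Remark \ref{rmk:twincomps}), resolving each edge $L_{\tfrak}^{?}$ into a pair of linking chains joined at $\Gamma_{\tfrak}$. Over $L$ the statement reduces to the component-and-chain cases already handled. Descending back to $K$, the swap of the two halves of a resolved edge becomes a reversal of the edge's orientation, and the exchange of halves between two distinct resolved edges becomes a swap of the two edges.

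For the chromatic twin $\tfrak$ (necessarily purple, so even only in $\Sigma_h$), only $\epsilon_{\tfrak,h}$ can affect the orientation, giving (5). For the black twin $\tfrak$, $\tfrak$ is empty in exactly one of $\Sigma_1, \Sigma_2$, say $\Sigma_i$, and a genuine twin in the other $\Sigma_j$ and in $\Sigma_h$. The key identification to verify is that the orientation factor in (6) is $\epsilon_{\tfrak,j}$ (matching the Frobenius action on the genuine loop in $\mathcal{C}_{j,k}$) while the $\pm$-permutation is $\epsilon_{\tfrak,i}$ (matching the sheet swap in $\mathcal{C}_{i,k}$, which permutes the two lifts of the trivial $\PP^1$-component above $\tfrak$). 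Pinning down which character plays which role, using only that $\tfrak$ is empty in $\Sigma_i$ and a twin in $\Sigma_j$, is the most delicate combinatorial step and is the crux of this part of the theorem.
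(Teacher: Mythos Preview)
Your proposal is correct and follows essentially the same route as the paper: lift the known Frobenius action on $\mathcal{C}_1,\mathcal{C}_2,\mathcal{C}_h$ from \cite[Theorem~8.5]{DDMM18} through the Galois-equivariant quotient maps, first handling principal components via the sheet-labelling, then linking chains, and finally reducing the twin cases via Remark~\ref{rmk:twincomps} and Lemma~\ref{lem:ramextmodel} to the component case over a ramified extension. The only step the paper makes explicit that you leave implicit is the observation that passing from $\Y_k$ to $\Y^{\min}_k$ only contracts components inside linking chains, so the Frobenius action on the shortened chains is inherited unchanged.
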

\begin{proof}
	By Proposition \ref{prop:comps}, the components we must blow down to obtain $\Y^{\min}_k$ from $\Y_k$ are all in linking chains, so it is sufficient to calculate the action of Frobenius on $\Y_k$ as the action on the shortened linking chains is the same as the originals. The action of Frobenius commutes with the quotient maps, so we can deduce the action of Frobenius on the components of $\Y_k$ from the corresponding action of Frobenius on $\mathcal{C}_1, \mathcal{C}_2$ and $\mathcal{C}_h$, which is known by \cite[Theorem~8.5]{DDMM18}. First we focus on clusters. For a principal cluster $\s$, the set $ \mathcal{E}_{\s} = \{\Gamma_{\s}^{\pm, \pm}\}$ is mapped to $\mathcal{E}_{\phi(\s)} = \{\Gamma_{\phi(\s)}^{\pm, \pm} \}$ by Frobenius, since Frobenius maps the images of $\mathcal{E}_{\s}$ to the images of $\mathcal{E}_{\phi(\s)}$ in  $\mathcal{C}_1, \mathcal{C}_2$ and $\mathcal{C}_h$. It remains to show which component of $\mathcal{E}_{\s}$ is mapped to which of $\mathcal{E}_{\phi(\s)}$. 
	
	If $\s$ is a principal cluster with polychromatic children then $\mathcal{E}_{\s}$ consists of one component and hence there is nothing to verify. If $\s$ is a principal cluster with monochromatic, red children (so therefore $\s \in \Sigma_2$ is even) then there are two components $\Gamma_{\s}^+, \Gamma_{\s}^-$ corresponding to $\s$, which are the lifts of two components $\Gamma_{\s,2}^+, \Gamma_{\s,2}^- \in \mathcal{C}_{2,k}$. Therefore $\phi$ acts on $\Gamma_{\s}^{\pm}$ as it does on $\Gamma_{\s, 2}^{\pm}$. But by \cite[Theorem~8.5]{DDMM18}, $\phi(\Gamma_{\s, 2}^{\pm}) = \Gamma_{\phi(\s), 2}^{\pm \epsilon_2(\phi)}$, and so $\phi(\Gamma_{\s}^{\pm}) = \Gamma_{\phi(\s)}^{\pm \epsilon_2(\phi)}$. Similarly if $\s$ has monochromatic, blue children or is \"ubereven with polychromatic children.
	
	Now suppose that $\s$ is an \"ubereven cluster with no chromatic children. In this case there are two components arising from $\s$ in $\mathcal{C}_{i,k}$ for $i=1,2$, $\Gamma_{\s,i}^+$ and $\Gamma_{\s,i}^-$. Consider $i=1$. In $\mathcal{C}_1$, the action of Frobenius is $\phi(\Gamma_{\s, 1}^{\pm}) = \Gamma_{\phi(\s), 1}^{\pm\epsilon_1(\phi)}$. The component $\Gamma_{\s,1}^+$ lifts to the components $\Gamma_{\s}^{+, +}$ and $\Gamma_{\s}^{-, +}$ and so the set $\{\Gamma_{\s}^{+,\pm}, \Gamma_{\s}^{-,\pm}\}$ is mapped to $\{\Gamma_{\s}^{+, \pm\epsilon_1{\phi}}, \Gamma_{\s}^{-, \pm\epsilon_1{\phi}}\}$. The same argument for $i=2$ implies that the set $\{\Gamma_{\s}^{\pm, +}, \Gamma_{\s}^{\pm, -} \} $ is mapped to the set $\{\Gamma_{\s}^{\pm \epsilon_2(\phi), +}, \Gamma_{\s}^{\pm \epsilon_2(\phi), -} \}$. Combining these gives the action of Frobenius. 
	
	For linking chains between components of principal clusters $\s' < \s$, the action on the whole linking chain is determined by the action on any component in the linking chain. Suppose $D$ is a $p$-adic disk with $\s' < D < \s$. If $\s'$ is chromatic (i.e., case (iii)) then $\Gamma_{D,\PP^1}$ has two preimages in $\Y$ and these are permuted like the principal components in (i). If $\s'$ is black (case (iv)), then $\Gamma_{D,\PP^1}$ has four preimages in $\Y_k$ and these are permuted like the principal components in case (ii).
	
	Loops associated to twins and linking chains between $\s$ and $\s'$ when $\Rcal = \s \sqcup \s'$ are not principal can be dealt with using Remark \ref{rmk:twincomps}, as in proofs in the rest of the section.
\end{proof}


\bibliographystyle{plain}
\bibliography{bibli}

\begin{thebibliography}{10}

\bibitem{amini2015lifting}
Omid Amini, Matthew Baker, Erwan Brugall{\'e}, and Joseph Rabinoff.
\newblock Lifting harmonic morphisms {I}: metrized complexes and {B}erkovich
  skeleta.
\newblock {\em Research in the Mathematical Sciences}, 2(1):1--67, 2015.

\bibitem{best2020users}
Alex~J. Best, L.~Alexander Betts, Matthew Bisatt, Raymond van Bommel, Vladimir
  Dokchitser, Omri Faraggi, Sabrina Kunzweiler, Céline Maistret, Adam Morgan,
  Simone Muselli, and Sarah Nowell.
\newblock A user's guide to the local arithmetic of hyperelliptic curves.
\newblock {\em arXiv preprint arXiv:2007.01749}, 2020.

\bibitem{betts2018computation}
L.~Alexander Betts.
\newblock On the computation of {T}amagawa numbers and {N}\'eron component
  groups of {J}acobians of semistable hyperelliptic curves.
\newblock {\em Journal of Number Theory}, to appear.

\bibitem{bisatt2019clusters}
Matthew Bisatt.
\newblock Clusters, inertia, and root numbers.
\newblock {\em arXiv preprint arXiv:1902.08981}, 2019.

\bibitem{bouw2020reduction}
Irene Bouw, Nirvana Coppola, Pınar Kılıçer, Sabrina Kunzweiler,
  Elisa~Lorenzo García, and Anna Somoza.
\newblock Reduction type of genus-3 curves in a special stratum of their moduli
  space.
\newblock {\em Women in Numbers Europe III: Research Directions in Number
  Theory}, to appear.

\bibitem{bouw2017computing}
Irene~I Bouw and Stefan Wewers.
\newblock Computing {L}-functions and semistable reduction of superelliptic
  curves.
\newblock {\em Glasgow Mathematical Journal}, 59(1):77--108, 2017.

\bibitem{dokchitser2018models}
Tim Dokchitser.
\newblock Models of curves over {DVR}s.
\newblock {\em Duke Math. J}, to appear.

\bibitem{DDMM18}
Tim Dokchitser, Vladimir Dokchitser, Céline Maistret, and Adam Morgan.
\newblock Arithmetic of hyperelliptic curves over local fields.
\newblock {\em arXiv preprint arXiv:1808.02936}, 2018.

\bibitem{faraggi20models}
Omri Faraggi and Sarah Nowell.
\newblock Models of hyperelliptic curves with tame potentially semistable
  reduction.
\newblock {\em Transactions of the London Mathematical Society}, 7(1):49--95,
  2020.

\bibitem{kunzweiler2019differential}
Sabrina Kunzweiler.
\newblock Differential forms on hyperelliptic curves with semistable reduction.
\newblock {\em arXiv preprint arXiv:1902.07784}, 2019.

\bibitem{lercier2018reduction}
Reynald Lercier, Qing Liu, Elisa~Lorenzo Garc{\'\i}a, and Christophe
  Ritzenthaler.
\newblock Reduction type of smooth quartics.
\newblock {\em Algebra and Number Theory}, to appear.

\bibitem{liu2006algebraic}
Qing Liu.
\newblock {\em Algebraic Geometry and Arithmetic Curves}.
\newblock Oxford Graduate Texts in Mathematics (0-19-961947-6). Oxford
  University Press, 2006.

\bibitem{liu_lorenzini_1999}
Qing Liu and Dino Lorenzini.
\newblock Models of curves and finite covers.
\newblock {\em Compositio Mathematica}, 118(1):61–102, 1999.

\bibitem{muselli2020models}
Simone Muselli.
\newblock Models and integral differentials of hyperelliptic curves.
\newblock {\em arXiv preprint arXiv:2003.01830}, 2020.

\bibitem{ruth2015models}
Julian R{\"u}th.
\newblock {\em Models of curves and valuations}.
\newblock PhD thesis, Universit{\"a}t Ulm, 2015.

\bibitem{srinivasan2015conductors}
Padmavathi Srinivasan.
\newblock Conductors and minimal discriminants of hyperelliptic curves with
  rational {W}eierstrass points.
\newblock {\em arXiv preprint arXiv:1508.05172}, 2015.

\end{thebibliography}

\end{document}